\newtheorem{theorem}{Theorem}
\newtheorem*{theorem*}{Theorem}
\newtheorem*{lemma*}{Lemma}
\newtheorem{corollary}{Corollary}
\newtheorem*{corollary*}{Corollary}
\newtheorem{proposition}{Proposition}
\newtheorem*{proposition*}{Proposition}
\theoremstyle{definition}
\newtheorem{definition}{Definition}
\newtheorem*{definition*}{Definition}
\newtheorem*{remark*}{Remark}
\newtheorem*{example*}{Example}
\DeclareMathOperator{\Lt}{Lt}
\begin{document}
\title{Multiple $T$-values and iterated log-tangent integrals}
\author{RYOTA UMEZAWA}
\subjclass[2020]{Primary 11M32, Secondary 33E20}
\keywords{Multiple $T$-values, Log-tangent integral, Bernoulli polynomials, Euler polynomials}
\date{}
\maketitle
\begin{abstract}
Multiple $T$-values, a variant of multiple zeta values of level two, were introduced and studied by Kaneko and Tsumura. This paper will introduce iterated log-tangent integrals and discuss their relations with multiple $T$-values. We will use ideas from the author's previous work on multiple zeta values and iterated log-sine integrals to do so.
\end{abstract}
\section{Introduction}
For an index $\mathbf{k}=(k_{1},\dots,k_{n}) \in \mathbb{Z}_{\ge0}^n$, $|\mathbf{k}|:=k_{1}+\dots+k_{n}$ is called the weight of $\mathbf{k}$ and ${\rm dep}(\mathbf{k}) := n$ is called the depth of $\mathbf{k}$. When $\mathbf{k} \in \mathbb{Z}_{\ge1}^n$ and $k_{n} \ge 2$, $\mathbf{k}$ is called admissible. We regard $\emptyset$ as the admissible index of weight $0$ and depth $0$. We define $\mathbf{k}_{-}=(k_{1},\dots,k_{n-1})$ when $n \ge 2$ and $\mathbf{k}_{-}=\emptyset$ when $n=1$. Let $\{k\}^{m}$ denote $m$ repetitions of $k$ and $\mathbf{1}_{n} = (\{1\}^{n})$. For two indices $\mathbf{k} = (k_{1},\dots,k_{n})$ and $\mathbf{l} = (l_{1},\dots,l_{n})$, we define
\begin{align*}
\mathbf{k}\pm\mathbf{l} = (k_{1}\pm l_{1},\dots,k_{n}\pm l_{n}),
\end{align*}
and write $\mathbf{k} \ge \mathbf{l}$ when $k_{1} \ge l_{1}, \dots, k_{n} \ge l_{n}$.

Multiple $T$-values are defined by Kaneko and Tsumura \cite{KT1, KT} as a variant of multiple zeta values of level two as follows.
\begin{definition}[Multiple $T$-values] For an admissible index $\mathbf{k}=(k_{1},\dots,k_{n})$, we define
\begin{align*}
T(\mathbf{k}) = 2^{n}\sum_{\substack{0<m_{1}<\cdots<m_{n}\\m_{j}\equiv j\ {\rm mod}\ 2}}\frac{1}{m^{k_{1}}_{1}\cdots m^{k_{n}}_{n}}.
\end{align*}
We regard $T(\emptyset)$ as $1$. 
\end{definition}
For $k \in \mathbb{Z}_{\ge 0}$, we define
\begin{align*}
\mathcal{T}_{k}^{\shuffle} = \sum_{\substack{|\mathbf{k}| = k\\\mathbf{k}:\text{admissible}}}\mathbb{Q} \cdot T(\mathbf{k})
\end{align*}
and $d_{k}^{T}:=\dim \mathcal{T}_{k}^{\shuffle}$. Then, Kaneko and Tsumura \cite{KT} gave some relation among multiple $T$-values, for example, duality, sum formula, etc., as well as the following conjectural table by numerical experiments.
\begin{table}[H]
  \begin{tabular}{|c|c|c|c|c|c|c|c|c|c|c|c|c|c|c|c|c|c} \hline
     $k$ & 0 & 1& 2 & 3 & 4 & 5 & 6 & 7 & 8 & 9 & 10 & 11 & 12 & $13$ & $14$ & $15$\\ \hline 
    $d^{T}_{k}$ & 1 & 0 & 1 & 1 & 2 & 2 & 4 & 5 & 9 & 10 & 19& 23 & 42 & $49$ & $91$ & $110$\\ \hline 
  \end{tabular}
\end{table}
Multiple $T$-values have recently been studied by many authors \cite{M}, \cite{T} and \cite{XZ} etc.

This paper introduces iterated log-tangent integrals and discusses relations between them and multiple $T$-values.
\begin{definition}[Iterated log-tangent integrals]
For $\sigma \in \mathbb{R}$, $\mathbf{k} = (k_{1}, \dots, k_{n}) \in \mathbb{Z}_{\ge1}^{n}$ and $\mathbf{l} = (l_{1},\dots,l_{n}) \in \mathbb{Z}_{\ge 0}^{n}$ such that $\mathbf{k} - \mathbf{1}_{n} \ge \mathbf{l}$, we define
\begin{align*}
\Lt_{\mathbf{k}}^{\mathbf{l}}(\sigma) &= (-1)^{n}\int_{0}^{\sigma}\int_{0}^{\theta_{n}}\dots\int_{0}^{\theta_{2}}
\prod_{u=1}^{n}\theta_u^{l_u}\left(\log\left|\tan{\frac{\theta_{u}}{2}}\right|\right)^{k_u-1-l_u}\,d\theta_{1}\cdots d\theta_{n-1}d\theta_{n}\\
&= -\int_{0}^{\sigma} \theta^{l_{n}}\left(\log\left|\tan{\frac{\theta}{2}}\right|\right)^{k_n-1-l_n}\Lt_{\mathbf{k}_{-}}^{\mathbf{l}_{-}}(\theta)\,d\theta,
\end{align*}
where $\Lt_{\emptyset}^{\emptyset}(\sigma)$ is regarded as $1$.
\end{definition}
Iterated log-tangent integrals converge absolutely for any $\sigma \in \mathbb{R}$ when $\mathbf{k} - \mathbf{1}_{n} \ge \mathbf{l}$. The single log-tangent integral is known to be related to the Riemann zeta function and others in \cite{EG}, \cite{EG2} and \cite{S}. The present study is a $T$-value analogue of papers on relations between multiple zeta values and iterated log-sine integrals \cite{U1} and \cite{U2}. Many of the results in the present paper are proved by ideas in \cite{U1} and \cite{U2}. The following function introduced in \cite{KT1, KT} plays an important role in that.
\begin{definition}[Multiple $A$-functions]
For $z \in \mathbb{C}$ and $\mathbf{k} = (k_{1}, \dots, k_{n}) \in \mathbb{Z}_{\ge 1}^{n}$, we define
\begin{align*}
A(\mathbf{k};z) = 2^{n}\sum_{\substack{0<m_{1}<\cdots<m_{n}\\m_{j}\equiv j\ {\rm mod}\ 2}}\frac{z^{m_{n}}}{m^{k_{1}}_{1}\cdots m^{k_{n}}_{n}}.
\end{align*}
\end{definition}
Multiple $A$-functions converge absolutely for $|z| < 1$ and can be continued holomorphically to $\mathbb{C}\setminus((-\infty,-1] \cup [1,\infty))$ by the integral representation (see \cite{X}). We regard $A(\emptyset;z)$ as $1$. 
We call $A(\mathbf{k};i)$ multiple $A$-values. Note that $A(\mathbf{k};i) \in \mathbb{R}$ if $n$ is even and $A(\mathbf{k};i) \in i\mathbb{R}$ if $n$ is odd. The multiple $A$-value can be written as
\begin{align*}
A(\mathbf{k};i) = (2i)^{n}\sum_{\substack{0<m_{1}<\cdots<m_{n}\\m_{j}\equiv j\ {\rm mod}\ 2}}\frac{(-1)^{(m_{n}-n)/2}}{m^{k_{1}}_{1}\cdots m^{k_{n}}_{n}},
\end{align*}
and $\tilde{T}(\mathbf{k}) := (-i)^{n}A(\mathbf{k};i)$ are studied in \cite{KT3}. Following multiple Clausen functions and multiple Glaisher functions (see \cite{BS}), we define $A^{e}(\mathbf{k};\sigma)$ and $A^{o}(\mathbf{k};\sigma)$ as follows.
\begin{definition}
We define
\begin{align*}
A^{e}(\mathbf{k};\sigma)&=\begin{cases}
\Re(A(\mathbf{k};e^{i\sigma}))\quad \text{if}\ |\mathbf{k}|:{\rm even},\\
\Im(A(\mathbf{k};e^{i\sigma}))\quad \text{if}\ |\mathbf{k}|:{\rm odd},
\end{cases}\\
A^{o}(\mathbf{k};\sigma)&=\begin{cases}
\Im(A(\mathbf{k};e^{i\sigma}))\quad \text{if}\ |\mathbf{k}|:{\rm even},\\
\Re(A(\mathbf{k};e^{i\sigma}))\quad \text{if}\ |\mathbf{k}|:{\rm odd}.
\end{cases}
\end{align*}
\end{definition}
In addition, for $k \ge 0$, we define 
\begin{align*}
\mathcal{A}^{e}_{k} = \sum_{|\mathbf{k}| = k}\mathbb{Q}\cdot A^{e}(\mathbf{k};\pi/2)\quad\text{and}\quad\mathcal{A}^{o}_{k} = \sum_{|\mathbf{k}| = k}\mathbb{Q}\cdot A^{o}(\mathbf{k};\pi/2).
\end{align*}
Note that $A^{e}(\mathbf{k};\pi/2) = 0$ if $|\mathbf{k}|+n$ is odd and $A^{o}(\mathbf{k};\pi/2) = 0$ if $|\mathbf{k}|+n$ is even. Let $d_{k}^{e} = \dim{\mathcal{A}_{k}^{e}}$ and $d_{k}^{o} = \dim{\mathcal{A}_{k}^{o}}$. Then, Kaneko and Tsumura \cite{KT3} gave the following conjectural table by numerical experiments.
\begin{table}[H]
  \begin{tabular}{|c|c|c|c|c|c|c|c|c|c|c|c|c|} \hline
     $k$ &0&1& 2 & 3 & 4 & 5 & 6 & 7 & 8 & 9 & 10 & 11 \\ \hline 
    $d_{k}^{e}$ 
	& 1 & 1 & 1 & 1 & 3 & 4 & 8 & 10 & 22 & 29 & 59 & 78 \\ \hline 
    $d_{k}^{o}$ 
	& 0 & 0 & 1 & 2 & 3 & 4 & 8 & 12 & 22 & 30 & 59 & 84 \\ \hline 
  \end{tabular}
\end{table}
Note that this table differs from the table in \cite{KT3} when $k$ is odd due to differences in definitions. 

In Section \ref{sec:ILT}, we will describe several properties of iterated log-tangent integrals and $\mathbb{Q}$-vector spaces
\begin{align*}
\mathcal{L}^{e}_{k} &:= \sum_{\substack{|\mathbf{k}| = k,\\ \mathbf{k}-\mathbf{1}_{{\rm dep}(\mathbf{k})} \ge \mathbf{l},\\|\mathbf{k} - \mathbf{1}_{{\rm dep}(\mathbf{k})}- \mathbf{l}|:\,\text{even}}}\mathbb{Q}\cdot\Lt_{\mathbf{k}}^{\mathbf{l}}\left(\frac{\pi}{2}\right),
\\
\mathcal{L}^{o}_{k} &:= \sum_{\substack{|\mathbf{k}| = k,\\ \mathbf{k}-\mathbf{1}_{{\rm dep}(\mathbf{k})} \ge \mathbf{l},\\|\mathbf{k} - \mathbf{1}_{{\rm dep}(\mathbf{k})}- \mathbf{l}|:\,\text{odd}}}\mathbb{Q}\cdot\Lt_{\mathbf{k}}^{\mathbf{l}}\left(\frac{\pi}{2}\right).
\end{align*}
These properties are similar to the properties of iterated log-sine integrals described in \cite{U1}.

In Section \ref{sec:TandLt}, we will prove that every multiple $T$-value $T(\mathbf{k})$ can be written as a $\mathbb{Q}$-linear combination of the real numbers
\begin{align*}
\left\{ \pi^m \Lt_{\mathbf{k}'}^{\mathbf{l}'}\left(\frac{\pi}{2}\right)\mathrel{}\middle|\mathrel{}  \begin{gathered} m+|\mathbf{k}'| = |\mathbf{k}|,\ m \ge 0,\\
\mathbf{k}' - \mathbf{1}_{{\rm dep}(\mathbf{k}')} > \mathbf{l}',\\
|\mathbf{k}'-\mathbf{1}_{{\rm dep}(\mathbf{k}')}-\mathbf{l}'| \equiv |\mathbf{k}|\ {\rm mod}\ 2
\end{gathered} \right\}.
\end{align*}
We also provide a method for obtaining relations among multiple $T$-values by using its representation. We will also prove that when $\mathbf{k}$ is of height one, i.e., $\mathbf{k} = (\{1\}^{n-1},k+1)$, multiple $T$-values can be expressed in terms of single log-tangent integrals involving the Bernoulli polynomials and the Euler polynomials (see Corollary \ref{co:T(2n,k)} and Corollary \ref{co:T(2n+1,k)}).

In Section \ref{sec:AandLt}, we will prove that the iterated log-tangent integral $\Lt_{\mathbf{k}}^{\mathbf{l}}\left(\sigma\right)$ can be written as a $\mathbb{Q}$-linear combination of 
\begin{align*}
\left\{ i^{|\mathbf{l}|+n}(i\sigma)^{m} (\pi i)^{p} T(\mathbf{k}_{1}) A(\mathbf{k}_{2};e^{i \sigma}) \mathrel{}\middle|\mathrel{}  \begin{gathered}m, p \in \mathbb{Z}_{\ge 0},\ 
\mathbf{k}_{1}, \mathbf{k}_{2}: \text{admissible},\\
m+p+|\mathbf{k}_{1}| + |\mathbf{k}_{2}| = |\mathbf{k}|,\\
p + {\rm dep}(\mathbf{k}_{1})+{\rm dep}(\mathbf{k}_{2}) = |\mathbf{k}-\mathbf{1}_{n}-\mathbf{l}|
 \end{gathered}\right\}.
\end{align*}
As a consequence of this, the following theorem is given.
\begin{theorem}\label{th:L=A}
For $k \in \mathbb{Z}_{\ge 0}$, we have 
\begin{align*}
\mathcal{L}^{e}_{k} = \mathcal{A}_{k}^{e}, \quad\text{and}\quad
\mathcal{L}^{o}_{k} = \mathcal{A}_{k}^{o}.
\end{align*}
\end{theorem}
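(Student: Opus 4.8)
The plan is to prove each equality by two inclusions, translating everything into the real numbers $\tilde{T}(\mathbf{k}) = (-i)^{n}A(\mathbf{k};i)$, where $n = \mathrm{dep}(\mathbf{k})$. Since $A(\mathbf{k};i) = i^{n}\tilde{T}(\mathbf{k})$, a short case check on $n \bmod 4$ shows that the nonzero one of the pair $A^{e}(\mathbf{k};\pi/2), A^{o}(\mathbf{k};\pi/2)$ is always $(-1)^{\lfloor n/2\rfloor}\tilde{T}(\mathbf{k})$; hence $\mathcal{A}_{k}^{e}$ (resp.\ $\mathcal{A}_{k}^{o}$) is exactly the $\mathbb{Q}$-span of the $\tilde{T}(\mathbf{k})$ with $|\mathbf{k}| = k$ and $|\mathbf{k}|+n$ even (resp.\ odd). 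I record two structural facts. First, $\pi = 2\tilde{T}((1)) = -2\Lt_{(1)}^{(0)}(\pi/2)$, so $\pi$ lies in both $\mathcal{A}_{1}^{e}$ and $\mathcal{L}_{1}^{e}$. Second, both $\bigoplus_{k}(\mathcal{A}_{k}^{e}\oplus\mathcal{A}_{k}^{o})$ and $\bigoplus_{k}(\mathcal{L}_{k}^{e}\oplus\mathcal{L}_{k}^{o})$ are $\mathbb{Z}/2$-graded $\mathbb{Q}$-algebras: the iterated-integral shuffle product expresses $\tilde{T}(\mathbf{k})\tilde{T}(\mathbf{l})$, resp.\ $\Lt(\pi/2)\Lt(\pi/2)$, as a $\mathbb{Q}$-combination of single objects of the same total weight and total depth, so the span is multiplicatively closed and the level $|\mathbf{k}|+\mathrm{dep}(\mathbf{k}) \bmod 2$ (resp.\ $|\mathbf{k}-\mathbf{1}_{n}-\mathbf{l}| \bmod 2$) is additive. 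Thus the $e$- and $o$-labels behave like a $\mathbb{Z}/2$-grading under multiplication.

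For the inclusion $\mathcal{A}_{k}^{e/o} \subseteq \mathcal{L}_{k}^{e/o}$ I argue by induction on $k$. Taking $\mathbf{l} = \mathbf{0}$ in the expansion of Section~\ref{sec:AandLt} at $\sigma = \pi/2$, the term with $m = p = 0$, $\mathbf{k}_{1} = \emptyset$, $\mathbf{k}_{2} = \mathbf{k}$ contributes $c\,i^{2n}\tilde{T}(\mathbf{k}) = c(-1)^{n}\tilde{T}(\mathbf{k})$ with nonzero leading coefficient $c$, while every other term has either $m+p>0$ or $\mathbf{k}_{1}\neq\emptyset$, hence carries a factor $\pi$, a factor $T(\mathbf{k}_{1})$, or an $A(\mathbf{k}_{2};i)$ of weight $<k$. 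Using $\pi\in\mathcal{L}_{1}^{e}$, the inductive hypothesis $\mathcal{A}_{<k} = \mathcal{L}_{<k}$, the algebra structure of $\mathcal{L}$, and the fact that each same-weight factor $T(\mathbf{k}_{1})$ already lies in $\mathcal{L}_{k}$ by the result of Section~\ref{sec:TandLt}, I can solve the identity for $\tilde{T}(\mathbf{k})$ and conclude $\tilde{T}(\mathbf{k})\in\mathcal{L}_{k}$; the parity of $|\mathbf{k}-\mathbf{1}_{n}-\mathbf{0}| = |\mathbf{k}|-n$ agrees with the level of $\mathbf{k}$, placing it in the correct $e$- or $o$-summand. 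This gives $\mathcal{A}_{k}^{e}\subseteq\mathcal{L}_{k}^{e}$ and $\mathcal{A}_{k}^{o}\subseteq\mathcal{L}_{k}^{o}$.

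For the reverse inclusion $\mathcal{L}_{k}^{e/o}\subseteq\mathcal{A}_{k}^{e/o}$ I apply the expansion of Section~\ref{sec:AandLt} directly to $\Lt_{\mathbf{k}}^{\mathbf{l}}(\pi/2)$, obtaining a $\mathbb{Q}$-combination of $\pi^{m+p}\,i^{|\mathbf{l}|+n+m+p}\,T(\mathbf{k}_{1})A(\mathbf{k}_{2};i)$. Writing $A(\mathbf{k}_{2};i) = i^{n_{2}}\tilde{T}(\mathbf{k}_{2})$, using $\pi\in\mathcal{A}_{1}^{e}$ and the algebra structure of $\mathcal{A}$, each product reduces to a $\mathbb{Q}$-combination of single $\tilde{T}$-values of weight $k$, provided the factor $T(\mathbf{k}_{1})$ itself lies in $\mathcal{A}$. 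Granting this, a bookkeeping of powers of $i$ and of levels completes the claim: the constraints $m+p+|\mathbf{k}_{1}|+|\mathbf{k}_{2}| = |\mathbf{k}|$ and $p+\mathrm{dep}(\mathbf{k}_{1})+\mathrm{dep}(\mathbf{k}_{2}) = |\mathbf{k}-\mathbf{1}_{n}-\mathbf{l}|$ force the total power of $i$ to be even (so each surviving term is real) exactly when the resulting $\tilde{T}$-product has level $\equiv |\mathbf{k}-\mathbf{1}_{n}-\mathbf{l}| \bmod 2$. Hence $\Lt_{\mathbf{k}}^{\mathbf{l}}(\pi/2)$ lands in $\mathcal{A}_{k}^{e}$ when $|\mathbf{k}-\mathbf{1}_{n}-\mathbf{l}|$ is even and in $\mathcal{A}_{k}^{o}$ when it is odd.

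The step I expect to be the real obstacle is the input used above that every multiple $T$-value lies in $\mathcal{A}$, i.e.\ that $T(\mathbf{k})$ is a $\mathbb{Q}$-linear combination of even-depth values $\tilde{T}(\mathbf{m})$ of weight $|\mathbf{k}|$ (equivalently, of level $\equiv |\mathbf{k}|$). Unlike the $\mathcal{A}\subseteq\mathcal{L}$ direction, this cannot be obtained from Section~\ref{sec:TandLt} alone, since that result places $T(\mathbf{k})$ in $\mathcal{L}$ rather than in $\mathcal{A}$, and the conjectural nature of the dimension tables rules out a dimension count. I plan to prove it by relating the two boundary specializations of the multiple $A$-function: since $A(\mathbf{k};-1) = (-1)^{n}T(\mathbf{k})$, it suffices to express $A(\mathbf{k};-1)$ through the values $A(\cdot;\pm i)$, which I will do by a distribution (root-of-unity) relation for $A(\mathbf{k};z)$ coming from its integral representation, followed by $A(\mathbf{k};-i) = \overline{A(\mathbf{k};i)}$ to return to $\tilde{T}$-values. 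Verifying that this relation respects both the weight grading and the depth parity is the delicate point; everything else is the routine shuffle-and-parity bookkeeping sketched above.
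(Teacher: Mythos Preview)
Your argument for $\mathcal{A}_{k}^{e/o} \subseteq \mathcal{L}_{k}^{e/o}$ contains a concrete error. With $\mathbf{l} = \mathbf{0}$, the term $m = p = 0$, $\mathbf{k}_{1} = \emptyset$, $\mathbf{k}_{2} = \mathbf{k}$ is subject to the depth constraint $\mathrm{dep}(\mathbf{k}_{2}) = |\mathbf{k} - \mathbf{1}_{n} - \mathbf{l}| = |\mathbf{k}|-n$, which forces $|\mathbf{k}| = 2n$; so unless $\mathbf{k} = (\{2\}^{n})$ the term $A(\mathbf{k};i)$ does not occur in $\Lt_{\mathbf{k}}^{\mathbf{0}}(\pi/2)$ at all (e.g.\ the paper's example for $\Lt_{3}^{(0)}$ contains $A(1,2;e^{i\sigma})$ but not $A(3;e^{i\sigma})$). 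Moreover, your claim that ``every other term has $m+p>0$ or $\mathbf{k}_{1}\neq\emptyset$'' is false: there are typically several admissible $\mathbf{k}_{2}$ of full weight $|\mathbf{k}|$ and the prescribed depth, so even when a leading term exists you face a linear system among same-weight $A$-values, not a single equation to invert. The paper avoids this entirely: Theorem~\ref{th:AtoLt} writes $A(\mathbf{k};i)$ directly as an $n$-fold integral of products of powers of $\log\tan(\theta_{u}/2)$ over the simplex $0<\theta_{1}<\cdots<\theta_{n}<\pi/2$, and binomial expansion gives the inclusion~\eqref{eq:AinL} in one line.

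For $\mathcal{L}_{k}^{e/o} \subseteq \mathcal{A}_{k}^{e/o}$ you correctly locate the obstruction at $T(\mathbf{k}_{1}) \in \mathcal{A}$ and your bookkeeping with Theorem~\ref{th:LttoF}, $\pi i = 2A(1;i)$, and the shuffle product matches the paper's argument. But your proposed route to $T(\mathbf{k}_{1}) \in \mathcal{A}$ via a distribution relation linking $A(\mathbf{k};-1)$ to $A(\cdot;\pm i)$ is not viable: already in depth one the oddness of the exponents forces $A(k;-z) = -A(k;z)$, so the fourth-root-of-unity sum $\sum_{\omega^{4}=1} A(k;\omega z)$ vanishes identically and produces no information; there is no evident higher-depth level-raising identity of this shape either. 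The paper's input here is Theorem~\ref{th:TtoA}, the path-composition formula obtained by splitting the iterated integral for $T(\mathbf{k})$ at $z=i$, which gives $T(\mathbf{k}) = \sum_{m=0}^{|\mathbf{k}|} A(\mathbf{k}^{(m)};i)\,\overline{A((\mathbf{k}^{*})^{(|\mathbf{k}|-m)};i)}$; the shuffle product then turns each summand into a $\mathbb{Q}$-combination of single $A$-values of weight $|\mathbf{k}|$ (Corollary~\ref{co:TtoA}). That identity, not a distribution relation, is the missing ingredient.
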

\section{Properties of iterated log-tangent integrals}\label{sec:ILT}
In this section, we describe some properties of iterated log-tangent integrals.
We note that the following properties hold for iterated log-tangent integrals as well as for iterated log-sine integrals.
\begin{proposition}[Shuffle product formula]\label{pr:Ltshuffle}
We have
\begin{align}{\rm Lt}_{k_{1}, \dots, k_{n}}^{(l_{1}, \dots, l_{n})}(\sigma)\cdot {\rm Lt}_{k_{n+1}, \dots, k_{n+n'}}^{(l_{n+1}, \dots, l_{n+n'})}(\sigma) = \sum_{\tau \in S_{n, n'}}{\rm Lt}_{k_{\tau({1})},\dots,k_{\tau({n+n'})}}^{(l_{\tau({1})},\dots,l_{\tau({n+n'})})}(\sigma),\label{eq:shuffle}
\end{align}
where $S_{n,n'}$ is a subset of $ S_{n+n'}$(the symmetric group of degree $n+n'$) and is defined by
\begin{align*}
\{\tau \in S_{n+n'} \mid \tau^{-1}(1)<\dots<\tau^{-1}(n)\ and \ \tau^{-1}(n+1)<\dots<\tau^{-1}(n+n')\}.
\end{align*}
\end{proposition}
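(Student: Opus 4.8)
The plan is to recognize $\Lt_{\mathbf{k}}^{\mathbf{l}}(\sigma)$ as a signed iterated integral and then reduce \eqref{eq:shuffle} to the classical shuffle product formula for iterated integrals, which is proved by induction on the total depth. Concretely, writing $\phi_u(\theta) = \theta^{l_u}\left(\log\left|\tan\frac{\theta}{2}\right|\right)^{k_u-1-l_u}$, we have
\begin{align*}
\Lt_{\mathbf{k}}^{\mathbf{l}}(\sigma) = (-1)^{n}\int_{0<\theta_{1}<\cdots<\theta_{n}<\sigma}\phi_{1}(\theta_{1})\cdots\phi_{n}(\theta_{n})\,d\theta_{1}\cdots d\theta_{n},
\end{align*}
that is, $\Lt_{\mathbf{k}}^{\mathbf{l}} = (-1)^{n} I(\phi_{1},\dots,\phi_{n})$, where $I$ denotes the iterated integral defined by $I(\phi_{1},\dots,\phi_{n})(\sigma) = \int_{0}^{\sigma}\phi_{n}(\theta)\,I(\phi_{1},\dots,\phi_{n-1})(\theta)\,d\theta$ with $I(\,)=1$. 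Since $\mathbf{k}-\mathbf{1}_{n}\ge\mathbf{l}$, the only possible singularity of each $\phi_{u}$ is at $\theta=0$, where a power of the logarithm is tempered by a nonnegative power of $\theta$, so all integrals converge absolutely, as already noted after the definition; this also legitimizes differentiating under the integral sign below.

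Next I would establish the shuffle identity for $I$, namely $I(\phi_{1},\dots,\phi_{n})(\sigma)\cdot I(\phi_{n+1},\dots,\phi_{n+n'})(\sigma) = \sum_{\tau\in S_{n,n'}}I(\phi_{\tau(1)},\dots,\phi_{\tau(n+n')})(\sigma)$ by induction on $n+n'$, the base case being trivial when one depth is $0$. For the inductive step, set $P(\sigma)=I(\phi_{1},\dots,\phi_{n})(\sigma)$ and $Q(\sigma)=I(\phi_{n+1},\dots,\phi_{n+n'})(\sigma)$, both of which vanish at $\sigma=0$. The recursion gives $P'(\sigma)=\phi_{n}(\sigma)I(\phi_{1},\dots,\phi_{n-1})(\sigma)$ and similarly for $Q$, so by the product rule
\begin{align*}
\frac{d}{d\sigma}\bigl(P(\sigma)Q(\sigma)\bigr) = \phi_{n}(\sigma)\,I(\phi_{1},\dots,\phi_{n-1})(\sigma)\,Q(\sigma) + P(\sigma)\,\phi_{n+n'}(\sigma)\,I(\phi_{n+1},\dots,\phi_{n+n'-1})(\sigma).
\end{align*}
Applying the induction hypothesis to the products $I(\phi_{1},\dots,\phi_{n-1})\cdot Q$ and $P\cdot I(\phi_{n+1},\dots,\phi_{n+n'-1})$ and then integrating from $0$ to $\sigma$ (the boundary term vanishing since $P(0)Q(0)=0$) reassembles exactly those shuffles whose outermost slot carries $\phi_{n}$ together with those whose outermost slot carries $\phi_{n+n'}$. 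As every $\tau\in S_{n,n'}$ belongs to precisely one of these two classes, the right-hand side equals $\sum_{\tau\in S_{n,n'}}I(\phi_{\tau(1)},\dots,\phi_{\tau(n+n')})(\sigma)$, closing the induction.

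To conclude, I would multiply this identity by $(-1)^{n+n'}$; since every term on the right has depth $n+n'$ and hence acquires the same sign $(-1)^{n+n'}$ upon conversion back to $\Lt$, this factor cancels uniformly and yields \eqref{eq:shuffle}. The one delicate point is the combinatorial bookkeeping in the inductive step: one must verify that peeling off the outermost integration variable corresponds to partitioning $S_{n,n'}$ according to whether the largest index originates in the first or the second group, and that the shuffle sets produced by the induction hypothesis on the shortened words embed correctly via the maps $S_{n-1,n'}\hookrightarrow S_{n,n'}$ and $S_{n,n'-1}\hookrightarrow S_{n,n'}$ that append the remaining letter in the last position. I expect this indexing check, rather than any analytic issue, to be the main obstacle.
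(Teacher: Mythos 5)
Your proof is correct and complete. The paper itself gives no argument for this proposition: it simply remarks that the property ``holds for iterated log-tangent integrals as well as for iterated log-sine integrals'' and cites the analogous statement in \cite[Proposition 1]{U1}, since the shuffle identity depends only on the iterated-integral structure and not on the specific integrands $\phi_u(\theta)=\theta^{l_u}(\log|\tan(\theta/2)|)^{k_u-1-l_u}$. Your differentiation-and-induction argument is one of the two standard ways to establish that structure; the other, which is what such references typically use and which sidesteps your ``delicate'' indexing step, is to decompose the product domain $\{0<\theta_1<\cdots<\theta_n<\sigma\}\times\{0<\theta_{n+1}<\cdots<\theta_{n+n'}<\sigma\}$ into the union, up to a set of measure zero, of the simplices $\{0<\theta_{\tau(1)}<\cdots<\theta_{\tau(n+n')}<\sigma\}$ indexed by $\tau\in S_{n,n'}$, and then invoke absolute convergence (guaranteed by $\mathbf{k}-\mathbf{1}_{n}\ge\mathbf{l}$) to apply Fubini. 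Your version buys a cleaner analytic statement (one only needs absolute continuity of the one-variable functions $P$ and $Q$ and the product rule almost everywhere, with $P(0)Q(0)=0$), at the cost of the combinatorial check that the embeddings $S_{n-1,n'}\hookrightarrow S_{n,n'}$ and $S_{n,n'-1}\hookrightarrow S_{n,n'}$ obtained by placing $n$ (respectively $n+n'$) in the last slot partition $S_{n,n'}$; that check is routine and you have identified it correctly. The sign bookkeeping $(-1)^{n}(-1)^{n'}=(-1)^{n+n'}$ is also handled correctly.
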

\begin{proposition}\label{pr:j}
For a pair of indices $\mathbf{k}=(k_{1},\dots,k_{n})$ and $\mathbf{l}=(l_{1},\dots,l_{n})$ with $n \ge 2$, we define 
\begin{align*}
\mathbf{k}_{j+}&= (k_{1},\dots,k_{j-1}, k_{j}+k_{j+1},k_{j+2},\dots,k_{n})&(j \in \{1,\dots,n-1\}),\\
\mathbf{k}_{j-}&= (k_{1},\dots,k_{j-2}, k_{j-1}+k_{j},k_{j+1},\dots,k_{n})&(j \in \{2,\dots,n\})
\end{align*}and
\begin{align*}
\mathbf{l}_{j+}&= (l_{1},\dots,l_{j-1}, k_{j}+l_{j+1},l_{j+2},\dots,l_{n})&(j \in \{1,\dots,n-1\}),\\
\mathbf{l}_{j-}&= (l_{1},\dots,l_{j-2}, l_{j-1}+k_{j},l_{j+1},\dots,l_{n})&(j \in \{2,\dots,n\}).
\end{align*}
If there exists $j \in \{1,\dots,n\}$ such that $k_j -1 - l_j = 0$, then the following equality holds:
\begin{align}\label{eq:j}
{\rm Lt}_{\mathbf{k}}^{\mathbf{l}}(\sigma)&=\begin{cases}
-\frac{1}{k_{1}}\sigma^{k_{1}}&{\rm if}\ n=1, j=1,\\
-\frac{1}{k_1}{\rm Lt}_{\mathbf{k}_{1+}}^{\mathbf{l}_{1+}}(\sigma)&{\rm if}\ n \ge 2, j=1, \\
\frac{1}{k_j}\left({\rm Lt}_{\mathbf{k}_{j-}}^{\mathbf{l}_{j-}}(\sigma)
-{\rm Lt}_{\mathbf{k}_{j+}}^{\mathbf{l}_{j+}}(\sigma)\right)&{\rm if}\ n \ge 2, 1<j<n,\\
\frac{1}{k_n}\left({\rm Lt}_{\mathbf{k}_{n-}}^{\mathbf{l}_{n-}}(\sigma)-\sigma^{k_{n}}{\rm Lt}_{\mathbf{k}_{-}}^{\mathbf{l}_{-}}(\sigma)\right)&{\rm if}\ n \ge  2, j = n .
\end{cases}
\end{align}
\end{proposition}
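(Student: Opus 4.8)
The plan is to prove all four cases by a single integration by parts in the variable $\theta_j$, exploiting that the hypothesis $k_j-1-l_j=0$ collapses the $j$-th factor of the integrand to the pure monomial $\theta_j^{l_j}=\theta_j^{k_j-1}$. The essential tool is the differential form of the defining recursion,
\begin{align*}
\frac{d}{d\theta}\Lt_{\mathbf{k}}^{\mathbf{l}}(\theta) = -\theta^{l_n}\left(\log\left|\tan\frac{\theta}{2}\right|\right)^{k_n-1-l_n}\Lt_{\mathbf{k}_{-}}^{\mathbf{l}_{-}}(\theta),
\end{align*}
which follows immediately from the integral representation. When $n=1$ and $j=1$ there is nothing to do: the integrand is simply $\theta^{k_1-1}$, and a direct integration yields $-\sigma^{k_1}/k_1$.

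For $n\ge 2$ I would first peel off the inner $j-1$ integrations and record that, as a function of $\theta_j$, they assemble into $\Lt_{(k_1,\dots,k_{j-1})}^{(l_1,\dots,l_{j-1})}(\theta_j)$ (with $\Lt_{\emptyset}^{\emptyset}=1$ when $j=1$). Writing the $\theta_j$-factor as $\theta_j^{k_j-1}=\frac{d}{d\theta_j}\frac{\theta_j^{k_j}}{k_j}$ and integrating by parts against this inner integral, the boundary contribution at the lower endpoint $\theta_j=0$ vanishes since $\theta_j^{k_j}\to 0$ and the inner integral vanishes there as well. The derivative falling on the inner integral is handled by the displayed recursion: it produces a factor $-\theta_j^{l_{j-1}}(\log|\tan(\theta_j/2)|)^{k_{j-1}-1-l_{j-1}}$ times a depth-$(j-2)$ integral, and after combining $\theta_j^{k_j}\cdot\theta_j^{l_{j-1}}=\theta_j^{k_j+l_{j-1}}$ one recognizes exactly the merged data $\mathbf{k}_{j-},\mathbf{l}_{j-}$; this is the source of the $\frac{1}{k_j}\Lt_{\mathbf{k}_{j-}}^{\mathbf{l}_{j-}}(\sigma)$ term. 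When $j=1$ this derivative term is absent, because the inner integral is the constant $1$.

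The surviving boundary term at the upper endpoint separates the remaining cases. If $j<n$ the upper limit is $\theta_{j+1}$, and the boundary value $\frac{1}{k_j}\theta_{j+1}^{k_j}\Lt_{(k_1,\dots,k_{j-1})}^{(l_1,\dots,l_{j-1})}(\theta_{j+1})$ merges into the $\theta_{j+1}$-factor; combining $\theta_{j+1}^{k_j}\cdot\theta_{j+1}^{l_{j+1}}=\theta_{j+1}^{k_j+l_{j+1}}$ produces precisely $\mathbf{k}_{j+},\mathbf{l}_{j+}$ and yields $-\frac{1}{k_j}\Lt_{\mathbf{k}_{j+}}^{\mathbf{l}_{j+}}(\sigma)$. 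If instead $j=n$ the upper limit is the genuine endpoint $\sigma$, so the boundary term is simply $-\frac{\sigma^{k_n}}{k_n}\Lt_{\mathbf{k}_{-}}^{\mathbf{l}_{-}}(\sigma)$, which gives the $j=n$ formula. Collecting the two contributions produces the stated identity in each range of $j$.

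The routine part is the integration by parts itself; the part demanding care is the bookkeeping. I expect the main obstacle to be verifying that the exponents produced by each merge match the definitions of $\mathbf{k}_{j\pm}$ and $\mathbf{l}_{j\pm}$ exactly — for the leftward merge, that the log-tangent exponent $(k_{j-1}+k_j)-1-(l_{j-1}+k_j)$ collapses back to $k_{j-1}-1-l_{j-1}$, and analogously $(k_j+k_{j+1})-1-(k_j+l_{j+1})=k_{j+1}-1-l_{j+1}$ for the rightward merge — together with tracking the overall sign $(-1)^n$ as the depth drops from $n$ to $n-1$. Once the signs are pinned down, the four cases follow by reading off which endpoint contributions survive.
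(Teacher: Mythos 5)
Your proposal is correct, and it is essentially the intended argument: the paper itself omits the proof, deferring to the analogous Proposition 2 of \cite{U1}, which is proved by exactly this integration by parts in the variable $\theta_j$ (writing $\theta_j^{k_j-1}=\frac{d}{d\theta_j}\theta_j^{k_j}/k_j$, with the derivative term producing the leftward merge $\mathbf{k}_{j-},\mathbf{l}_{j-}$ and the upper boundary term producing the rightward merge $\mathbf{k}_{j+},\mathbf{l}_{j+}$ or the factor $\sigma^{k_n}\Lt_{\mathbf{k}_-}^{\mathbf{l}_-}(\sigma)$ when $j=n$). Your exponent checks $(k_{j-1}+k_j)-1-(l_{j-1}+k_j)=k_{j-1}-1-l_{j-1}$ and $(k_j+k_{j+1})-1-(k_j+l_{j+1})=k_{j+1}-1-l_{j+1}$ are precisely the bookkeeping needed, and the sign count works out as you describe.
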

Proposition \ref{pr:Ltshuffle} and Proposition \ref{pr:j} correspond to \cite[Proposition 1]{U1} and \cite[Proposition 2]{U1}, respectively.

We write $\mathcal{L}^{e}_{k}$ as the $\mathbb{Q}$-vector space spanned by all iterated log-tangent integrals at $\pi/2$ with $|\mathbf{k}| = k$ and even $|\mathbf{k}  - \mathbf{1}_{{\rm dep}(\mathbf{k})} - \mathbf{l}|$, and $\mathcal{L}^{o}_{k}$ as the same but with odd $|\mathbf{k}  - \mathbf{1}_{{\rm dep}(\mathbf{k})} - \mathbf{l}|$. Namely,
\begin{align*}
\mathcal{L}^{e}_{k} &:= \sum_{\substack{|\mathbf{k}| = k,\\ \mathbf{k}-\mathbf{1}_{{\rm dep}(\mathbf{k})} \ge \mathbf{l},\\|\mathbf{k} - \mathbf{1}_{{\rm dep}(\mathbf{k})}- \mathbf{l}|:\,\text{even}}}\mathbb{Q}\cdot\Lt_{\mathbf{k}}^{\mathbf{l}}\left(\frac{\pi}{2}\right),
\\
\mathcal{L}^{o}_{k} &:= \sum_{\substack{|\mathbf{k}| = k,\\ \mathbf{k}-\mathbf{1}_{{\rm dep}(\mathbf{k})} \ge \mathbf{l},\\|\mathbf{k} - \mathbf{1}_{{\rm dep}(\mathbf{k})}- \mathbf{l}|:\,\text{odd}}}\mathbb{Q}\cdot\Lt_{\mathbf{k}}^{\mathbf{l}}\left(\frac{\pi}{2}\right).
\end{align*}
Note that $\left|\mathbf{k}-\mathbf{1}_{{\rm dep}(\mathbf{k})}-\mathbf{l}\right|$ is the sum of exponents of $\log\left|\tan(\theta_{u}/2)\right|$. When $\mathbf{k} = \mathbf{l} = \emptyset$ i.e. $\Lt_{\emptyset}^{\emptyset}\left(\pi/2\right) = 1$, we consider $|\mathbf{k} - \mathbf{1}_{{\rm dep}(\mathbf{k})}- \mathbf{l}|=0$. Based on \cite[Theorem 8]{U2}, the number of pairs of indices $\mathbf{k}$ and $\mathbf{l}$ such that $|\mathbf{k}| = k$, $\mathbf{k}-\mathbf{1}_{{\rm dep}(\mathbf{k})} \ge \mathbf{l}$, and $|\mathbf{k} - \mathbf{1}_{{\rm dep}(\mathbf{k})}- \mathbf{l}|$ is even, is $1$ when $k=0$ and $(F_{2k}+F_{k})/2$ when $k \ge 1$, where $F_{k}$ represents the $k$-th Fibonacci number. Similarly, the number of pairs of indices $\mathbf{k}$ and $\mathbf{l}$ such that $|\mathbf{k}| = k$, $\mathbf{k}-\mathbf{1}_{{\rm dep}(\mathbf{k})} \ge \mathbf{l}$, and $|\mathbf{k} - \mathbf{1}_{{\rm dep}(\mathbf{k})}- \mathbf{l}|$ is odd, is $0$ when $k=0$ and $(F_{2k}-F_{k})/2$ when $k \ge 1$. 

By Proposition \ref{pr:Ltshuffle} and Proposition \ref{pr:j}, $\mathcal{L}^{e}_{k}$ and $\mathcal{L}^{o}_{k}$ can be rewritten as follows:
\begin{align*}
\mathcal{L}^{e}_{k} &= \sum_{\substack{m+|\mathbf{k}| = k,\ m \ge 0,\\ \mathbf{k}-\mathbf{1}_{{\rm dep}(\mathbf{k})} > \mathbf{l},\\|\mathbf{k} - \mathbf{1}_{{\rm dep}(\mathbf{k})}- \mathbf{l}|:\,\text{even}}}\mathbb{Q}\cdot\pi^{m}\Lt_{\mathbf{k}}^{\mathbf{l}}\left(\frac{\pi}{2}\right),\\
\mathcal{L}^{o}_{k} &= \sum_{\substack{m+|\mathbf{k}| = k,\ m \ge 0,\\ \mathbf{k}-\mathbf{1}_{{\rm dep}(\mathbf{k})} > \mathbf{l}, \\ |\mathbf{k} - \mathbf{1}_{{\rm dep}(\mathbf{k})}- \mathbf{l}|:\,\text{odd}}}\mathbb{Q}\cdot\pi^{m}\Lt_{\mathbf{k}}^{\mathbf{l}}\left(\frac{\pi}{2}\right).
\end{align*}
Based on \cite[Theorem 10]{U2}, the number of combinations of non-negative integers $m$ and indices $\mathbf{k}$ and $\mathbf{l}$ such that $m+|\mathbf{k}| = k$, $\mathbf{k}-\mathbf{1}_{{\rm dep}(\mathbf{k})} > \mathbf{l}$, and $|\mathbf{k} - \mathbf{1}_{{\rm dep}(\mathbf{k})}- \mathbf{l}|$ is even, is $1$ when $k=0, 1$ and $2^{k-2}$ when $k \ge 2$. Similarly, the number of combinations of non-negative integers $m$ and indices $\mathbf{k}$ and $\mathbf{l}$ such that $m+|\mathbf{k}| = k$, $\mathbf{k}-\mathbf{1}_{{\rm dep}(\mathbf{k})} > \mathbf{l}$, and $|\mathbf{k} - \mathbf{1}_{{\rm dep}(\mathbf{k})}- \mathbf{l}|$ is odd, is $0$ when $k=0, 1$ and $2^{k-2}$ when $k \ge 2$.

\section{Relation between multiple $T$-values and iterated log-tangent integrals}\label{sec:TandLt}
In this section, we prove that multiple $T$-values can be written in terms of iterated log-tangent integrals and provide a method for obtaining relations among multiple $T$-values. We also discuss relations between multiple $T$-values of height one and single log-tangent integrals involving the Bernoulli polynomials and the Euler polynomials.
\subsection{Iterated log-tangent integral representation of multiple $T$-values}
We write an admissible index $\mathbf{k}$ in the form
\begin{align*}
\mathbf{k}=(\{1\}^{a_{1}-1},b_{1}+1,\{1\}^{a_{2}-1},b_{2}+1,\dots,\{1\}^{a_{h}-1},b_{h}+1),
\end{align*}
then its dual index $\mathbf{k}^{*}$ is defined by
\[\mathbf{k}^{*}=(\{1\}^{b_{h}-1},a_{h}+1,\dots,\{1\}^{b_{2}-1},a_{2}+1,\{1\}^{b_{1}-1},a_{1}+1).\]
We define $\mathbf{k}^{(0)}=\mathbf{k}$,
\begin{align*}
\mathbf{k}^{(1)}=&
  \begin{cases}
    (k_{1},\dots,k_{n-1},k_{n}-1)&\quad {\rm if}\ k_{n}>1,\\
    (k_{1},\dots,k_{n-1})&\quad {\rm if}\ k_{n}=1,\\
    \emptyset&\quad {\rm if}\ n=1, k_{n}=1,
  \end{cases}
\end{align*}
and recursively $\mathbf{k}^{(m)}=(\mathbf{k}^{(m-1)})^{(1)}$ when $m > 1$. Then multiple $T$-values can be written in terms of multiple $A$-values as follows.
\begin{theorem}\label{th:TtoA} For an admissible index $\mathbf{k}$, we have
\begin{align}
T(\mathbf{k}) = \sum_{m=0}^{|\mathbf{k}|}A(\mathbf{k}^{(m)};i)\overline{A((\mathbf{k}^{*})^{(|\mathbf{k}|-m)};i)}. \label{eq:TtoA}
\end{align}
\end{theorem}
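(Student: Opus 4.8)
The plan is to realise both $T(\mathbf{k})$ and the occurring $A$-values as iterated integrals of one and the same word, and then to split the path of integration at the point $i$.

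First I would record the iterated integral representation. Writing $\omega_0 = \frac{dt}{t}$ and $Y = \frac{2\,dt}{1-t^2}$, both holomorphic on $\mathbb{C}\setminus\{0,\pm1\}$, one checks (the level-two analogue of the classical polylogarithm computation; cf. \cite{X}) that for $\mathbf{k}=(k_1,\dots,k_n)$
\[
A(\mathbf{k};z) = \int_0^z Y\,\omega_0^{k_1-1}\,Y\,\omega_0^{k_2-1}\cdots Y\,\omega_0^{k_n-1},
\]
read from the innermost (smallest variable, leftmost) to the outermost form. The $n$ copies of $Y$ produce the factor $2^n$, and the key point is that $\frac{2}{1-t^2}=2\sum_{b\ge0}t^{2b}$ together with one integration flips the parity of the running summation variable, so that the condition $m_j\equiv j \bmod 2$ is generated automatically. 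In particular $T(\mathbf{k}) = A(\mathbf{k};1) = \int_0^1 W(\mathbf{k})$, where $W(\mathbf{k})$ denotes this word of length $N:=|\mathbf{k}|$ and the value at $z=1$ is the Abel limit, legitimate because $\mathbf{k}$ is admissible.

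Next I would split the path. Deforming the segment $[0,1]$ to the concatenation $0\to i\to1$ inside $\mathbb{C}\setminus\{0,\pm1\}$ (homotopic rel endpoints, the two paths bounding a region free of the puncture $-1$), homotopy invariance of iterated integrals of closed forms together with the path-composition formula gives
\[
\int_0^1 W(\mathbf{k}) = \sum_{j=0}^{N}\left(\int_{0\to i} w_1\cdots w_j\right)\left(\int_{i\to1} w_{j+1}\cdots w_N\right),
\]
where $W(\mathbf{k}) = w_1\cdots w_N$. Every prefix $w_1\cdots w_j$ again begins with $Y$ and is therefore the word of a genuine index; deleting the last $N-j$ letters of $W(\mathbf{k})$ one at a time is exactly the operation $\mathbf{k}\mapsto\mathbf{k}^{(1)}$ iterated $N-j$ times, so that $\int_{0\to i} w_1\cdots w_j = A(\mathbf{k}^{(N-j)};i)$, using the analytic continuation of $A(\,\cdot\,;z)$ to $z=i$.

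The heart of the argument is the second factor, which I would evaluate by the duality involution $\phi(t)=\frac{1-t}{1+t}$. A direct computation gives $\phi^*\omega_0 = -Y$ and $\phi^*Y = -\omega_0$, while $\phi(1)=0$, $\phi(0)=1$ and $\phi(i)=-i$. Pulling back along $\phi$ sends the path $i\to1$ to $-i\to0$ and interchanges $\omega_0\leftrightarrow Y$ (one sign per letter), and the subsequent reversal of the path $-i\to0$ to $0\to-i$ contributes a second factor $(-1)^{N-j}$; the two signs cancel. The resulting word, namely the reverse of the suffix $w_{j+1}\cdots w_N$ with $\omega_0$ and $Y$ interchanged, is precisely the length-$(N-j)$ prefix of $W(\mathbf{k}^{*})$, i.e. the word of $(\mathbf{k}^{*})^{(j)}$; this is the combinatorial incarnation of the dual index. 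Hence
\[
\int_{i\to1} w_{j+1}\cdots w_N = \int_{0\to-i} W\big((\mathbf{k}^{*})^{(j)}\big) = A\big((\mathbf{k}^{*})^{(j)};-i\big) = \overline{A\big((\mathbf{k}^{*})^{(j)};i\big)},
\]
the last step because the coefficients are real, so $\overline{A(\mathbf{l};i)}=A(\mathbf{l};-i)$. Substituting both factors and reindexing by $m=N-j$ yields \eqref{eq:TtoA}.

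The main obstacle is analytic rather than combinatorial: the forms $\omega_0$ and $Y$ are singular at the endpoints $0$ and $1$, so the path-splitting and homotopy steps must be carried out with shuffle (tangential base point) regularisation, and one must verify both that deforming $[0,1]$ through $i$ preserves the regularised value and that the boundary evaluations $A(\mathbf{l};i)$ at the non-admissible prefixes are the correct analytic continuations. Once the regularisation is in place, the identification of prefixes with $\mathbf{k}^{(m)}$, of dualised suffixes with $(\mathbf{k}^{*})^{(N-m)}$, and the sign bookkeeping under $\phi$ are all routine.
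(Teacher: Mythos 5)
Your argument is correct and is essentially the paper's own proof: the paper likewise splits the path $0\to 1$ at $i$ and invokes the path-composition formula of Kaneko--Tsumura (its alternative ``integration by parts'' derivation just unrolls that composition one letter at a time, with the involution $t\mapsto\frac{1-t}{1+t}$ producing the dual index exactly as in your pullback computation). One small remark: the regularisation you flag as the main obstacle is not actually needed, since admissibility of $\mathbf{k}$ means $W(\mathbf{k})$ begins with $Y$ (regular at $0$) and ends with $\omega_0$ (regular at $1$), so every sub-integral in the splitting converges outright.
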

\begin{proof}
This follows from the path-composition formula (see \cite[Theorem 2.6]{KT3}).
We can also prove this theorem by repeating integration by part. This can be seen by starting with 
\begin{align*}
T(\mathbf{k}) &=  \int_{0}^{i}\frac{A(\mathbf{k}^{(1)};t)}{t}\,dt + \int_{i}^{1}\frac{A(\mathbf{k}^{(1)};t)}{t}\,dt\\
& = A(\mathbf{k};i) - \int_{i}^{1}A(\mathbf{k}^{(1)};t) \left(A(1;\frac{1-t}{1+t})\right)'\,dt
\end{align*}
and using
\begin{align*}
\frac{d}{dt}A(\mathbf{k};t) = \begin{cases}\frac{2}{1-t^2}A(\mathbf{k}^{(1)};t)&(k_{n} = 1), \\ \frac{1}{t}A(\mathbf{k}^{(1)};t)&(k_{n} > 1)\end{cases}
\end{align*}
and
\begin{align*}
\frac{d}{dt}A(\mathbf{k};\frac{1-t}{1+t}) = \begin{cases}-\frac{1}{t}A(\mathbf{k}^{(1)};\frac{1-t}{1+t})&(k_{n} = 1), \\ -\frac{2}{1-t^2}A(\mathbf{k}^{(1)};\frac{1-t}{1+t})&(k_{n} > 1).\end{cases}
\end{align*}
\end{proof}
\begin{corollary}\label{co:TtoA}
Every multiple $T$-value of weight $k$ can be written as a $\mathbb{Q}$-linear combination of multiple $A$-values of weight $k$.
\end{corollary}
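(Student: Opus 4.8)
The plan is to feed the explicit formula \eqref{eq:TtoA} of Theorem~\ref{th:TtoA} through two reductions: first remove the complex conjugation, then linearize the products of two $A$-values by means of a shuffle product formula for the multiple $A$-functions.

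For the conjugation, I would use the parity remark recorded right after the definition of the multiple $A$-functions: $A(\mathbf{l};i)\in\mathbb{R}$ when $\mathrm{dep}(\mathbf{l})$ is even and $A(\mathbf{l};i)\in i\mathbb{R}$ when $\mathrm{dep}(\mathbf{l})$ is odd. Hence $\overline{A(\mathbf{l};i)}=(-1)^{\mathrm{dep}(\mathbf{l})}A(\mathbf{l};i)$, so every conjugated factor in \eqref{eq:TtoA} turns into $\pm A((\mathbf{k}^{*})^{(|\mathbf{k}|-m)};i)$ with a rational sign. Thus $T(\mathbf{k})$ becomes a $\mathbb{Q}$-linear combination of products $A(\mathbf{a};i)\,A(\mathbf{b};i)$ of two multiple $A$-values.

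For the products I would invoke the iterated integral representation underlying Theorem~\ref{th:TtoA}: writing $\omega_0=dt/t$ and $\omega_1=2\,dt/(1-t^2)$, the derivative formulas displayed in its proof show that $A(\mathbf{k};t)$ is the iterated integral from $0$ to $t$ of the word $\omega_1\omega_0^{k_1-1}\cdots\omega_1\omega_0^{k_n-1}$. Since the product of two iterated integrals along the same path equals the iterated integral of the shuffle of their words, we obtain $A(\mathbf{a};i)\,A(\mathbf{b};i)=\sum_{\mathbf{c}}A(\mathbf{c};i)$, a $\mathbb{Z}$-linear combination indexed by the words occurring in $w_{\mathbf{a}}\shuffle w_{\mathbf{b}}$. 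Every such word begins with $\omega_1$ (both factors do), so it encodes a genuine index of positive integers; its length, hence the weight of $\mathbf{c}$, equals $|\mathbf{a}|+|\mathbf{b}|$; and evaluation at $t=i$ is legitimate because $i$ lies in the domain of holomorphy $\mathbb{C}\setminus((-\infty,-1]\cup[1,\infty))$. The $\mathbf{c}$ need not be admissible, but this is harmless, since multiple $A$-values are defined for all indices in $\mathbb{Z}_{\ge1}^{n}$.

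It remains only to track weights. Each application of the operation $(\cdot)^{(1)}$ drops the weight by exactly one, so $\mathbf{k}^{(m)}$ has weight $|\mathbf{k}|-m$; duality preserves weight, so $|\mathbf{k}^{*}|=|\mathbf{k}|$ and $(\mathbf{k}^{*})^{(|\mathbf{k}|-m)}$ has weight $m$. Each product in \eqref{eq:TtoA} therefore has weight $(|\mathbf{k}|-m)+m=k$, and the shuffle product preserves weight, so all the $A$-values produced have weight $k$. Assembling the three steps expresses $T(\mathbf{k})$ as a $\mathbb{Q}$-linear combination of multiple $A$-values of weight $k$. The one genuinely substantive point is the shuffle product formula for the multiple $A$-functions; once the one-form model $\{\omega_0,\omega_1\}$ is in place it is the standard Fubini argument, and the only thing to check with care is that the shuffle of two words starting in $\omega_1$ again consists of words starting in $\omega_1$, so that every resulting term is a legitimate multiple $A$-value.
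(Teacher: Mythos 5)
Your argument is correct and is essentially the paper's own proof: the paper likewise disposes of the conjugation by the parity of the depth (so $\overline{A(\,\cdot\,;i)}=\pm A(\,\cdot\,;i)$) and then linearizes the products in \eqref{eq:TtoA} via the shuffle product formula $A(w_{1};z)A(w_{2};z)=A(w_{1}\shuffle w_{2};z)$ for $w_{1},w_{2}\in\mathfrak{H}^{1}$. You merely spell out the iterated-integral justification of that shuffle formula and the weight bookkeeping, which the paper leaves implicit.
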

This corollary follows from the fact that $\overline{A((\mathbf{k}^{*})^{(|\mathbf{k}|-m)};i)}$ is equal to $A((\mathbf{k}^{*})^{(|\mathbf{k}|-m)};i)$ or $-A((\mathbf{k}^{*})^{(|\mathbf{k}|-m)};i)$ and the shuffle product formula for multiple $A$-functions.
\begin{corollary}
Every A-value $A(\mathbf{k};i)$ with odd weight and odd depth can be written as a sum of the product of two multiple $A$-values of weights less than $|\mathbf{k}|$, with integer coefficients.
\end{corollary}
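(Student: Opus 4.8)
The plan is to read the corollary off the $T$-to-$A$ expansion in Theorem \ref{th:TtoA} by isolating its two extreme terms and then taking imaginary parts. First I would observe that applying the operation $(1)$ to an index of weight $w$ exactly $w$ times reduces it to $\emptyset$ while dropping the weight by one at each step; hence $(\mathbf{k}^{*})^{(|\mathbf{k}|)}=\emptyset$ and $\mathbf{k}^{(|\mathbf{k}|)}=\emptyset$, so the $m=0$ and $m=|\mathbf{k}|$ summands of \eqref{eq:TtoA} are $A(\mathbf{k};i)$ and $\overline{A(\mathbf{k}^{*};i)}$ respectively. This gives
\[
T(\mathbf{k}) = A(\mathbf{k};i) + \overline{A(\mathbf{k}^{*};i)} + \sum_{m=1}^{|\mathbf{k}|-1} A(\mathbf{k}^{(m)};i)\,\overline{A((\mathbf{k}^{*})^{(|\mathbf{k}|-m)};i)}.
\]
For $1\le m\le |\mathbf{k}|-1$ the two factors have weights $|\mathbf{k}|-m$ and $m$, both strictly between $0$ and $|\mathbf{k}|$, so each middle summand is already a product of two multiple $A$-values of weight less than $|\mathbf{k}|$; only the two isolated boundary terms need to be disposed of.

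Next I would bring in the parity hypotheses. Since $T(\mathbf{k})$ is a (positive) real number, taking imaginary parts of the displayed identity annihilates its left-hand side. Writing $\mathbf{k}$ in the block form $(\{1\}^{a_{1}-1},b_{1}+1,\dots,\{1\}^{a_{h}-1},b_{h}+1)$ shows ${\rm dep}(\mathbf{k})=\sum_j a_j$, ${\rm dep}(\mathbf{k}^{*})=\sum_j b_j$ and $|\mathbf{k}|=\sum_j(a_j+b_j)$, whence ${\rm dep}(\mathbf{k})+{\rm dep}(\mathbf{k}^{*})=|\mathbf{k}|$. With $|\mathbf{k}|$ and ${\rm dep}(\mathbf{k})$ both odd this forces ${\rm dep}(\mathbf{k}^{*})$ to be even, so $A(\mathbf{k}^{*};i)$ is real and $\overline{A(\mathbf{k}^{*};i)}$ contributes nothing to the imaginary part. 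Meanwhile ${\rm dep}(\mathbf{k})$ odd makes $A(\mathbf{k};i)$ purely imaginary, so $A(\mathbf{k};i)=i\,\Im\bigl(A(\mathbf{k};i)\bigr)$. Combining these yields
\[
A(\mathbf{k};i) = -\,i\sum_{m=1}^{|\mathbf{k}|-1}\Im\!\left(A(\mathbf{k}^{(m)};i)\,\overline{A((\mathbf{k}^{*})^{(|\mathbf{k}|-m)};i)}\right).
\]

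Finally I would convert each imaginary part into a genuine product. Because every $A$-value is real or purely imaginary according to the parity of its depth, each factor satisfies $\overline{A(\cdot;i)}=\varepsilon\,A(\cdot;i)$ with $\varepsilon=+1$ for even depth and $\varepsilon=-1$ for odd depth. Applying $\Im(z)=(z-\bar z)/(2i)$ together with the overall factor $-i$, the $m$-th term becomes $\tfrac12(\varepsilon_{\alpha}-\varepsilon_{\beta})\,A(\mathbf{k}^{(m)};i)\,A((\mathbf{k}^{*})^{(|\mathbf{k}|-m)};i)$, where $\varepsilon_{\alpha},\varepsilon_{\beta}\in\{\pm1\}$ record the depth parities of $\mathbf{k}^{(m)}$ and $(\mathbf{k}^{*})^{(|\mathbf{k}|-m)}$. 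Thus every coefficient $\tfrac12(\varepsilon_{\alpha}-\varepsilon_{\beta})$ lies in $\{-1,0,1\}$, and $A(\mathbf{k};i)$ is exhibited as an integer combination of products of two $A$-values of weight less than $|\mathbf{k}|$, as claimed.

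The individual computations here are routine; the step I would treat as the crux is the parity bookkeeping. Specifically, the argument hinges on the identity ${\rm dep}(\mathbf{k})+{\rm dep}(\mathbf{k}^{*})=|\mathbf{k}|$, which is exactly what guarantees that the surviving boundary term $\overline{A(\mathbf{k}^{*};i)}$ is real and disappears under $\Im$, and on tracking the real-versus-imaginary nature of each middle factor carefully enough to see that the coefficients come out as integers rather than merely rationals. Once those parities are pinned down, the rest falls out mechanically from Theorem \ref{th:TtoA}.
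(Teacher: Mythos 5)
Your treatment of the admissible case is correct and is essentially the paper's own argument: take the imaginary part of \eqref{eq:TtoA}, observe that the two boundary terms are $A(\mathbf{k};i)$ (purely imaginary, since ${\rm dep}(\mathbf{k})$ is odd) and $\overline{A(\mathbf{k}^{*};i)}$ (real, since ${\rm dep}(\mathbf{k})+{\rm dep}(\mathbf{k}^{*})=|\mathbf{k}|$ forces ${\rm dep}(\mathbf{k}^{*})$ to be even), and use $\overline{A(\,\cdot\,;i)}=\pm A(\,\cdot\,;i)$ to land on integer coefficients. The paper organizes the parity bookkeeping slightly differently --- it notes that the $m$-th product is purely imaginary exactly when $m$ is even, so that only the terms with $m=2,4,\dots,|\mathbf{k}|-1$ survive --- but this is the same computation, and your $\tfrac12(\varepsilon_{\alpha}-\varepsilon_{\beta})\in\{-1,0,1\}$ check is sound.

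The genuine gap is that the corollary asserts the result for \emph{every} $A$-value of odd weight and odd depth, whereas your entire argument presupposes that $\mathbf{k}$ is admissible. If $k_{n}=1$ (the index ends in one or more $1$'s), then $T(\mathbf{k})$ diverges, the dual index $\mathbf{k}^{*}$ is not defined, and Theorem \ref{th:TtoA} does not apply, so there is nothing to take the imaginary part of. The paper spends the second half of its proof on precisely this case: writing $\mathbf{k}=(k_{1},\dots,k_{n-m},\{1\}^{m})$ and invoking the explicit shuffle-regularization formula of Ihara--Kaneko--Zagier, $we_{1}^{m}=\sum_{j=0}^{m}(-1)^{m-j}((w'\shuffle e_{1}^{m-j})e_{0})\shuffle e_{1}^{j}$, it expands $A(\mathbf{k};i)$ into the $j=0$ term $(-1)^{m}A((w'\shuffle e_{1}^{m})e_{0};i)$ --- an integer combination of \emph{admissible} $A$-values of odd weight and odd depth, to which the first part of the argument applies --- plus terms with $j\ge1$ that are already products of two $A$-values of weight less than $|\mathbf{k}|$. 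Without some such reduction your proposal proves a strictly weaker statement, so you need to either add this step or restrict the claim to admissible indices.
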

\begin{proof}
If $\mathbf{k}$ is an admissible index with odd weight and odd depth, then\\ $A(\mathbf{k}^{(m)};i)\overline{A((\mathbf{k}^{*})^{(|\mathbf{k}|-m)};i)}$ is a purely imaginary number when $m$ is even, and a real number when $m$ is odd.
Therefore, by taking the imaginary part of the formula (\ref{eq:TtoA}), we obtain
\begin{align*}
A(\mathbf{k};i) = -\sum_{m=1}^{(|\mathbf{k}|-1)/2}A(\mathbf{k}^{(2m)};i)\overline{A((\mathbf{k}^{*})^{(|\mathbf{k}|-2m)};i)}.
\end{align*}
Note that both $|\mathbf{k}^{(2m)}|$ and $|(\mathbf{k}^{*})^{(|\mathbf{k}|-2m)}|$ are less than $|\mathbf{k}|$ for $m = 1, \dots, (|\mathbf{k}|-1)/2$.

In order to prove the assertion for $\mathbf{k} = (k_{1}, \dots, k_{n-m}, \{1\}^{m})$, we use the algebraic setup introduced by Hoffman \cite{H}. We define the noncommutative polynomial ring $\mathfrak{H} = \mathbb{Q}\langle e_{0}, e_{1}\rangle$ and its subspace $\mathfrak{H}^{0} = \mathbb{Q}+e_{1}\mathfrak{H}e_{0}$ and $\mathfrak{H}^{1} = \mathbb{Q}+e_{1}\mathfrak{H}$. The shuffle product $\shuffle$ in $\mathfrak{H}$ is recursively defined by
\begin{align*}
w\shuffle1&=w,\ \,\,1\shuffle w=w,\\
u_{1}w_{1}\shuffle u_{2}w_{2}&=u_{1}(w_{1}\shuffle u_{2}w_{2})+u_{2}(u_{1}w_{1}\shuffle w_{2})
\end{align*}
$(u_{1},u_{2} \in \{e_{0},e_{1}\}, w, w_{1},w_{2}:\text{words})$ with $\mathbb{Q}$-bilinearity. We also use $T$ and $A$ as the $\mathbb{Q}$-linear maps $T\colon \mathfrak{H}^{0} \to \mathbb{R}$ and $A(\,\cdot\,;z )\colon \mathfrak{H}^{1}\to \mathbb{C}$ by
\begin{align*}
T(e_{1} e_{0}^{k_{1}-1}\cdots e_{1}e_{0}^{k_{n}-1}) & = T(k_{1},\dots,k_{n}),&&& T(1_{\mathfrak{H}}) &= T(\emptyset) = 1,
\\
A(e_{1} e_{0}^{k_{1}-1}\cdots e_{1}e_{0}^{k_{n}-1};z) &= A\left({k_{1},\dots,k_{n}};z\right),&&& A(1_{\mathfrak{H}};z) &= A(\emptyset;z)=1,
\end{align*}
respectively. Note that $A(w_{1};z)A(w_{2};z)=A(w_{1}\shuffle w_{2};z)$ for any $w_{1}, w_{2} \in \mathfrak{H}^{1}$.
By the explicit regularization formula (see \cite[Corollary 5 and (5.2)]{IKZ}):
\begin{align*}
we_{1}^{m} = \sum_{j=0}^{m}(-1)^{m-j}\left(\left(w' \shuffle e_{1}^{m-j}\right)e_{0}\right) \shuffle e_{1}^{j}\quad (w = w'e_{0} \in \mathfrak{H}^{0}),
\end{align*}
we have
\begin{align*}
&A(k_{1}, \dots, k_{n-m}, \{1\}^{m};i) \\
&= \sum_{j=0}^{m}(-1)^{m-j}A\left(\left(w' \shuffle e_{1}^{m-j}\right)e_{0};i\right)A(\{1\}^{j};i)\\
& = (-1)^{m}A\left(\left(w' \shuffle e_{1}^{m}\right)e_{0};i\right) + \sum_{j=1}^{m}(-1)^{m-j}A\left(\left(w' \shuffle e_{1}^{m-j}\right)e_{0};i\right)A(\{1\}^{j};i),
\end{align*}
where $w' = e_{1}e_{0}^{k_{1}-1} \cdots e_{1}e_{0}^{k_{n-m}-1}$. The assertion follows since the term\\
$A\left(\left(w' \shuffle e_{1}^{m}\right)e_{0};i\right)$ is a sum of multiple $A$-values of admissible indices with odd weights and odd depths, with integer coefficients.
\end{proof}
Multiple $A$-values can be written in terms of iterated log-tangent integrals as follows.
\begin{theorem}\label{th:AtoLt} For $\mathbf{k} \in \mathbb{Z}_{\ge 1}^{n}$ and $\sigma \in [0,\pi)$, we have
\begin{align*}
&A(\mathbf{k};i\tan(\sigma/2))\\
&=i^n\int_{0<\theta_{1}<\cdots<\theta_{n}<\theta_{n+1}=\sigma}
\prod_{u=1}^{n}\frac{\left(\log\tan(\theta_{u+1}/{2})-\log\tan(\theta_{u}/{2})\right)^{k_{u}-1}}{(k_{u}-1)!}\,d\theta_{u}.
\end{align*}
In particular, 
\begin{align}
&A(\mathbf{k};i) \label{eq:AKi}\\
&=i^n\int_{0<\theta_{1}<\cdots<\theta_{n}<\theta_{n+1}=\pi/2}
\prod_{u=1}^{n}\frac{\left(\log\tan(\theta_{u+1}/2)-\log\tan(\theta_{u}/2)\right)^{k_{u}-1}}{(k_{u}-1)!}\,d\theta_{u}. \nonumber
\end{align}
\end{theorem}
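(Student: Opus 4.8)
The plan is to prove the first identity by induction on the weight $|\mathbf{k}|$, exploiting the two differential formulas for $A(\mathbf{k};t)$ recorded in the proof of Theorem \ref{th:TtoA}, and then to obtain (\ref{eq:AKi}) by setting $\sigma=\pi/2$, where $\tan(\pi/4)=1$. The key geometric observation is the change of variables $t=i\tan(\theta/2)$, under which the two one-forms occurring in those formulas become very simple: since $dt=\tfrac{i}{2}\sec^{2}(\theta/2)\,d\theta$ and $1-t^{2}=\sec^{2}(\theta/2)$, one computes
\begin{align*}
\frac{2\,dt}{1-t^{2}}=i\,d\theta \quad\text{and}\quad \frac{dt}{t}=d\log\tan(\theta/2).
\end{align*}
Write $L(\theta):=\log\tan(\theta/2)$ and let $R(\mathbf{k};\sigma)$ denote the right-hand side of the claimed identity. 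I would show that $A(\mathbf{k};i\tan(\sigma/2))$ and $R(\mathbf{k};\sigma)$ solve the same differential equation in $\sigma$ with the same limit as $\sigma\to0^{+}$. Because the differential formulas hold throughout the domain of holomorphy of $A(\mathbf{k};\cdot)$, which contains $\{i\tan(\sigma/2):\sigma\in(0,\pi)\}$, this argument is valid on the whole interval $(0,\pi)$.

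For the left-hand side, the chain rule combined with the formula for $\tfrac{d}{dt}A(\mathbf{k};t)$ and the substitutions $1-t^{2}=\sec^{2}(\sigma/2)$, $t=i\tan(\sigma/2)$ gives
\begin{align*}
\frac{d}{d\sigma}A(\mathbf{k};i\tan(\sigma/2))=\begin{cases} i\,A(\mathbf{k}^{(1)};i\tan(\sigma/2)) & (k_{n}=1),\\ L'(\sigma)\,A(\mathbf{k}^{(1)};i\tan(\sigma/2)) & (k_{n}>1).\end{cases}
\end{align*}
For the right-hand side, I would differentiate under the integral sign, keeping in mind that $\sigma$ enters $R(\mathbf{k};\sigma)$ both as the upper limit $\theta_{n}<\sigma$ and through $\theta_{n+1}=\sigma$ inside the factor $(L(\sigma)-L(\theta_{n}))^{k_{n}-1}/(k_{n}-1)!$. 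When $k_{n}>1$ this factor vanishes at $\theta_{n}=\sigma$, so the boundary term from the fundamental theorem of calculus drops out and only the $\sigma$-derivative of the integrand survives, yielding the factor $L'(\sigma)$ and lowering $k_{n}$ by one; when $k_{n}=1$ the factor equals $1$, so only the boundary contribution at $\theta_{n}=\sigma$ remains, collapsing the integral to the one for $\mathbf{k}^{(1)}=(k_{1},\dots,k_{n-1})$ and producing the factor $i$ from $i^{n}=i\cdot i^{n-1}$. In both cases $\tfrac{d}{d\sigma}R(\mathbf{k};\sigma)$ is given by the displayed formula with every $A$ replaced by $R$.

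By the inductive hypothesis applied to the lower-weight index $\mathbf{k}^{(1)}$, the two derivatives coincide, so $A(\mathbf{k};i\tan(\sigma/2))-R(\mathbf{k};\sigma)$ is constant in $\sigma$; letting $\sigma\to0^{+}$ forces this constant to be $0$, since the left-hand side tends to $A(\mathbf{k};0)=0$ while the rescaling $\theta_{u}=\sigma s_{u}$ (under which $L(\theta_{u+1})-L(\theta_{u})\to\log(s_{u+1}/s_{u})$ for small angles) shows $R(\mathbf{k};\sigma)=O(\sigma^{n})\to0$. The base case $\mathbf{k}=\emptyset$ is $1=1$. I expect the main obstacle to be the bookkeeping in the case $k_{n}=1$, where one must justify differentiation under the integral sign and the boundary evaluation; this reduces to checking that the integrand is absolutely integrable near $\theta_{1}\to0$, which holds because its only singularity there is a power of $\log(1/\theta_{1})$.
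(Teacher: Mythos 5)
Your proof is correct, but it takes a different route from the paper's. The paper's proof is a one-line change of variables: it quotes the known iterated-integral representation of $A(\mathbf{k};z)$ over the simplex $0<z_{1}<\dots<z_{n}<z$ with the forms $\frac{2\,dz_{u}}{1-z_{u}^{2}}$ and $\frac{\log^{k_{u}-1}(z_{u+1}/z_{u})}{(k_{u}-1)!}$, and substitutes $z_{u}=i\tan(\theta_{u}/2)$, which is exactly your pair of identities $\frac{2\,dt}{1-t^{2}}=i\,d\theta$ and $\frac{dt}{t}=d\log\tan(\theta/2)$. You instead re-derive that representation from scratch by induction on the weight, matching the differential recursions $\frac{d}{d\sigma}A(\mathbf{k};i\tan(\sigma/2))$ against $\frac{d}{d\sigma}R(\mathbf{k};\sigma)$ (with the correct case split at $k_{n}=1$ versus $k_{n}>1$, where the boundary term survives or dies, respectively) and pinning down the constant via $\sigma\to0^{+}$. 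Both arguments hinge on the same substitution; yours is longer but self-contained, relying only on the differential formulas already recorded in the proof of Theorem \ref{th:TtoA} rather than on the integral representation cited from the literature, at the cost of the (routine but real) justifications of differentiation under the integral sign and of the estimate $R(\mathbf{k};\sigma)\to0$, which you correctly reduce to the integrability of the logarithmic singularity at $\theta_{1}=0$. One cosmetic caveat: after the rescaling $\theta_{u}=\sigma s_{u}$ the bound is $O(\sigma^{n}\log^{|\mathbf{k}|-n}(1/\sigma))$ rather than a clean $O(\sigma^{n})$ because of the factor $(L(\sigma)-L(\theta_{n}))^{k_{n}-1}\approx\log^{k_{n}-1}(1/s_{n})$ and its companions, but this still tends to $0$, so the conclusion stands.
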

\begin{proof}
For $z \in \mathbb{C}\setminus((-\infty,-1]\cup[1,\infty))$, the following integral representation holds:
\begin{align*}
A(\mathbf{k};z)&=\int_{0}^{z}\frac{\log^{k_{n}-1}(z/z_{n})}{(k_{n}-1)!}\frac{2dz_{n}}{1-z_{n}^2}\int_{0}^{z_{n}}\frac{\log^{k_{n-1}-1}(z_{n}/z_{n-1})}{(k_{n-1}-1)!}\frac{2dz_{n-1}}{1-z_{n-1}^2}\\
&\quad\cdots \int_{0}^{z_{2}}\frac{\log^{k_{1}-1}(z_{2}/z_{1})}{(k_{1}-1)!}\frac{2dz_{1}}{1-z_{1}^2}.
\end{align*}
By putting $z= i\tan(\sigma/2)$, $z_{u}=i\tan(\theta_{u}/2)$, we obtain the desired formula. The formula (\ref{eq:AKi}) is given by putting $\sigma=\pi/2$.
\end{proof}
The formula (\ref{eq:AKi}) implies
\begin{align} \label{eq:AinL}
A(\mathbf{k};i) \in \begin{cases} 
\mathcal{L}_{|\mathbf{k}|}^{e}&{\rm if}\quad |\mathbf{k}|:{\rm even},\, {\rm dep}(\mathbf{k}):{\rm even},\\ 
\mathcal{L}_{|\mathbf{k}|}^{o}&{\rm if}\quad |\mathbf{k}|:{\rm odd},\, {\rm dep}(\mathbf{k}):{\rm even},\\
i\mathcal{L}_{|\mathbf{k}|}^{e}&{\rm if}\quad |\mathbf{k}|:{\rm odd},\, {\rm dep}(\mathbf{k}):{\rm odd},\\
i\mathcal{L}_{|\mathbf{k}|}^{o}&{\rm if}\quad |\mathbf{k}|:{\rm even},\, {\rm dep}(\mathbf{k}):{\rm odd}.
\end{cases}
\end{align}
Therefore, the formula (\ref{eq:TtoA}) implies
\begin{align}\label{eq:TinLt}
T(\mathbf{k}) \in \begin{cases} 
\mathcal{L}_{|\mathbf{k}|}^{e} + i\mathcal{L}_{|\mathbf{k}|}^{o}& {\rm if}\quad |\mathbf{k}|:{\rm even},\\
\mathcal{L}_{|\mathbf{k}|}^{o} + i\mathcal{L}_{|\mathbf{k}|}^{e}& {\rm if}\quad |\mathbf{k}|:{\rm odd},
\end{cases}
\end{align}
namely, every multiple $T$-value $T(\mathbf{k})$ can be written as a $\mathbb{Q}$-linear combination of the real numbers
\begin{align*}
\left\{ \pi^m \Lt_{\mathbf{k}'}^{\mathbf{l}'}\left(\frac{\pi}{2}\right)\mathrel{}\middle|\mathrel{}  \begin{gathered} m+|\mathbf{k}'| = |\mathbf{k}|,\ m \ge 0,\\
\mathbf{k}' - \mathbf{1}_{{\rm dep}(\mathbf{k}')} > \mathbf{l}',\\
|\mathbf{k}'-\mathbf{1}_{{\rm dep}(\mathbf{k}')}-\mathbf{l}'| \equiv |\mathbf{k}|\ {\rm mod}\ 2
\end{gathered} \right\}.
\end{align*}
\subsection{Relations of multiple $T$-values}
We calculate examples of the formula (\ref{eq:TtoA}) and the formula (\ref{eq:AKi}) and obtain relations among multiple $T$-values. First, note that when $\mathbf{k}$ is of height one, i.e., $\mathbf{k} = (\{1\}^{n-1},k+1)$, by applying the formula (\ref{eq:AKi}) to the formula (\ref{eq:TtoA}), noting that $A(\{1\}^{n};i) = (i \pi/2)^n/n!$ and 
\begin{align*}
A(\{1\}^{n-1},k+1;i)&=i^n\int_{0}^{\frac{\pi}{2}}\frac{\theta^{n-1}}{(n-1)!}
\frac{\left(-\log\tan\frac{\theta}{2}\right)^{k}}{k!}\,d\theta_{u},
\end{align*}we obtain the following simple formula:
\begin{align}
&T(\{1\}^{n-1},k+1) \label{eq:Tnk}\\
&=\frac{i^{n}}{(n-1)!k!}\int_{0}^{\pi/2}\theta^{n-1}\left(-\frac{\pi}{2}i-\log{\tan{\frac{\theta}{2}}}\right)^{k}\,d\theta \nonumber \\
&\quad+\frac{(-i)^{k}}{n!(k-1)!}\int_{0}^{\pi/2}\theta^{k-1}\left(\frac{\pi}{2}i-\log{\tan{\frac{\theta}{2}}}\right)^{n}\,d\theta-\frac{(i\pi/2)^{n}}{n!}\frac{(-i\pi/2)^{k}}{k!}. \nonumber
\end{align}
This formula is an analogue of the following formula:
\begin{align*}
&\zeta(\{1\}^{n-1},k+1) \\
&=\frac{i^{n}}{(n-1)!k!}\int_{0}^{\pi/3}\theta^{n-1}\left(i\frac{\theta-\pi}{2}-\log{\left(2\sin{\frac{\theta}{2}}\right)}\right)^{k}\,d\theta\nonumber \\
&\quad+\frac{(-i)^{k}}{n!(k-1)!}\int_{0}^{\pi/3}\theta^{k-1}\left(-i\frac{\theta-\pi}{2}-\log{\left(2\sin{\frac{\theta}{2}}\right)}\right)^{n}\,d\theta - \frac{\left(i\pi/3\right)^{n}}{n!}\frac{\left(-i\pi/3\right)^{k}}{k!}, \nonumber
\end{align*}
which can be found \cite{BBK} and \cite{U1}. 

Next, we explain how to find relations among multiple $T$-values by using the formula (\ref{eq:TtoA}) and the formula (\ref{eq:AKi}). The following equations are obtained by applying the formula (\ref{eq:TtoA}) and the formula (\ref{eq:AKi}) to multiple $T$-values  of weights up to $4$:
\begin{align*}
&T(2) = \frac{\pi^2}{4},\\
&T(3) = -\frac{\pi^3}{16} i + \frac{\pi}{2} \, {\rm Lt}_{2}^{(0)}\left(\frac{\pi}{2}\right) - {\rm Lt}_{3}^{(1)}\left(\frac{\pi}{2}\right) - \frac{1}{2} i \, {\rm Lt}_{3}^{(0)}\left(\frac{\pi}{2}\right),\\
&T(1,2) = \frac{\pi^3}{16} i + \frac{\pi}{2} \, {\rm Lt}_{2}^{(0)}\left(\frac{\pi}{2}\right) - {\rm Lt}_{3}^{(1)}\left(\frac{\pi}{2}\right) + \frac{1}{2} i \, {\rm Lt}_{3}^{(0)}\left(\frac{\pi}{2}\right),\\
&T(4) = -\frac{\pi^4}{96} - \frac{\pi^2}{8} i \, {\rm Lt}_{2}^{(0)}\left(\frac{\pi}{2}\right) - \frac{\pi}{4} \, {\rm Lt}_{3}^{(0)}\left(\frac{\pi}{2}\right) + \frac{1}{2} i \, {\rm Lt}_{4}^{(2)}\left(\frac{\pi}{2}\right) + \frac{1}{6} i \, {\rm Lt}_{4}^{(0)}\left(\frac{\pi}{2}\right),\\
&T(1,3) = \frac{\pi^4}{64} + {\rm Lt}_{4}^{(1)}\left(\frac{\pi}{2}\right),\\
&T(2,2) = -2 \, {\rm Lt}_{4}^{(1)}\left(\frac{\pi}{2}\right),\\
&T(1,1,2) = -\frac{\pi^4}{96} + \frac{\pi^2}{8} i \, {\rm Lt}_{2}^{(0)}\left(\frac{\pi}{2}\right) - \frac{\pi}{4} \, {\rm Lt}_{3}^{(0)}\left(\frac{\pi}{2}\right) - \frac{1}{2} i \, {\rm Lt}_{4}^{(2)}\left(\frac{\pi}{2}\right) - \frac{1}{6} i \, {\rm Lt}_{4}^{(0)}\left(\frac{\pi}{2}\right).
\end{align*}
The following equations are obtained by comparing the real and imaginary parts of the above equations:
\begin{align}
&T(3) = T(1,2) = \int_{0}^{\pi/2}\left(\theta - \frac{\pi}{2}\right)\log{\tan{\frac{\theta}{2}}}\,d\theta, \nonumber\\
&{\rm Lt}_{3}^{(0)}\left(\frac{\pi}{2}\right) = -\frac{1}{8} \, \pi^{3}, \label{eq:Lt30} \\
&\frac{1}{8} \, \pi^{2} {\rm Lt}_{2}^{(0)}\left(\frac{\pi}{2}\right)  = \frac{1}{2} \, {\rm Lt}_{4}^{(2)}\left(\frac{\pi}{2}\right) + \frac{1}{6} \, {\rm Lt}_{4}^{(0)}\left(\frac{\pi}{2}\right). \label{eq:Lt40}
\end{align}
By the equation (\ref{eq:Lt30}), all multiple $T$-values of weight $4$ are represented as follows:
\begin{align*}
T(4) = T(1,1,2) = \frac{\pi^4}{48}, \quad T(1,3) = \frac{1}{64} \, \pi^{4} + {\rm Lt}_{4}^{(1)}\left(\frac{\pi}{2}\right), \quad T(2,2) = -2 \, {\rm Lt}_{4}^{(1)}\left(\frac{\pi}{2}\right).
\end{align*}
Therefore, we can see that $\dim \mathcal{T}^{\shuffle}_{4} \le 2$.
The following equations are obtained by applying the formula (\ref{eq:TtoA}) and the formula (\ref{eq:AKi}) to all multiple $T$-values of weight $5$ and taking their real parts:
\begin{align}
&T(5) = T(1,1,1,2) = -\frac{\pi^3}{48} \, {\rm Lt}_{2}^{(0)}\left(\frac{\pi}{2}\right) + \frac{\pi}{12} \, {\rm Lt}_{4}^{(0)}\left(\frac{\pi}{2}\right) + \frac{1}{6} \, {\rm Lt}_{5}^{(3)}\left(\frac{\pi}{2}\right), \label{eq:T5}\\
&T(1,4) = T(1,1,3) = \frac{\pi^2}{8} \, {\rm Lt}_{3}^{(1)}\left(\frac{\pi}{2}\right) - \frac{\pi}{4} \, {\rm Lt}_{4}^{(2)}\left(\frac{\pi}{2}\right) - \frac{1}{6} \, {\rm Lt}_{5}^{(1)}\left(\frac{\pi}{2}\right), \nonumber\\
&T(2,3) = T(1,2,2) =\frac{\pi^3}{16} \, {\rm Lt}_{2}^{(0)}\left(\frac{\pi}{2}\right) - \frac{\pi^2}{2} \, {\rm Lt}_{3}^{(1)}\left(\frac{\pi}{2}\right) + \frac{3\pi}{4} \, {\rm Lt}_{4}^{(2)}\left(\frac{\pi}{2}\right) \nonumber\\
& \hspace{32mm}+ \frac{1}{2} \, {\rm Lt}_{5}^{(1)}\left(\frac{\pi}{2}\right) + \frac{1}{2} \, {\rm Lt}_{2 , 3}^{(0 , 0)}\left(\frac{\pi}{2}\right), \nonumber\\
&T(3,2) = T(2,1,2) = -\frac{\pi^3}{16} \, {\rm Lt}_{2}^{(0)}\left(\frac{\pi}{2}\right) + \frac{\pi^2}{2} \, {\rm Lt}_{3}^{(1)}\left(\frac{\pi}{2}\right) - \frac{3\pi}{4} \, {\rm Lt}_{4}^{(2)}\left(\frac{\pi}{2}\right) \nonumber\\
& \hspace{32mm} - \frac{1}{2} \, {\rm Lt}_{5}^{(1)}\left(\frac{\pi}{2}\right) - \frac{3}{2} \, {\rm Lt}_{2 , 3}^{(0 , 0)}\left(\frac{\pi}{2}\right). \nonumber
\end{align}
In addition, the following equations are obtained by applying the formula (\ref{eq:TtoA}) and the formula (\ref{eq:AKi}) to $T(1,5)$ and $T(2,4)$ and taking their imaginary parts, respectively:
\begin{align*}
0&=-\frac{\pi^3}{48} \, {\rm Lt}_{3}^{(1)}\left(\frac{\pi}{2}\right) + \frac{\pi}{12} \, {\rm Lt}_{5}^{(3)}\left(\frac{\pi}{2}\right) + \frac{\pi}{12} \, {\rm Lt}_{5}^{(1)}\left(\frac{\pi}{2}\right),\\
0&=-\frac{\pi^4}{96} \, {\rm Lt}_{2}^{(0)}\left(\frac{\pi}{2}\right) + \frac{\pi^3}{24} \, {\rm Lt}_{3}^{(1)}\left(\frac{\pi}{2}\right) + \frac{\pi^2}{8} \, {\rm Lt}_{4}^{(2)}\left(\frac{\pi}{2}\right) \\
&\quad - \frac{\pi}{3} \, {\rm Lt}_{5}^{(3)}\left(\frac{\pi}{2}\right) - \frac{\pi}{4} \, {\rm Lt}_{5}^{(1)}\left(\frac{\pi}{2}\right) - \frac{\pi}{4} \, {\rm Lt}_{2 , 3}^{(0 , 0)}\left(\frac{\pi}{2}\right).
\end{align*}
Applying the above equations divided by $\pi$ and the equation (\ref{eq:Lt40}) to the equations (\ref{eq:T5}), we obtain the following equations:
\begin{align*}
T(5) & = T(1,1,1,2) \\
&= -\frac{1}{6}\int_{0}^{\pi/2}\left(\theta - \frac{\pi}{2}\right)^3 \log{\tan{\frac{\theta}{2}}}\,d\theta +\frac{\pi^2}{8}\int_{0}^{\pi/2}\left(\theta - \frac{\pi}{2}\right) \log{\tan{\frac{\theta}{2}}}\,d\theta,\\
T(1,4) &= T(1,1,3) \\
&= -\frac{1}{6}\int_{0}^{\pi/2}\left(\theta - \frac{\pi}{2}\right)^3 \log{\tan{\frac{\theta}{2}}}\,d\theta + \frac{\pi^2}{24}\int_{0}^{\pi/2}\left(\theta - \frac{\pi}{2}\right) \log{\tan{\frac{\theta}{2}}}\,d\theta,\\
T(2,3) &= T(1,2,2)\\
&= \frac{2}{3}\int_{0}^{\pi/2}\left(\theta - \frac{\pi}{2}\right)^3 \log{\tan{\frac{\theta}{2}}}\,d\theta - \frac{\pi^2}{12}\int_{0}^{\pi/2}\left(\theta - \frac{\pi}{2}\right) \log{\tan{\frac{\theta}{2}}}\,d\theta,\\
T(3,2) &= T(2,1,2)\\
&= -\int_{0}^{\pi/2}\left(\theta - \frac{\pi}{2}\right)^3 \log{\tan{\frac{\theta}{2}}}\,d\theta + \frac{\pi^2}{4}\int_{0}^{\pi/2}\left(\theta - \frac{\pi}{2}\right) \log{\tan{\frac{\theta}{2}}}\,d\theta.
\end{align*}
Therefore, we can see that $\dim \mathcal{T}^{\shuffle}_{5} \le 2$. 

Let $l_{k}$ be the upper bound of the dimension of $\mathcal{T}^{\shuffle}_{k}$ obtained by this method. Then the following table is obtained by using a computer. The algorithm used is similar to the one described in \cite[Section 5]{U1}.
\begin{table}[H]
  \begin{tabular}{|c|c|c|c|c|c|c|c|c|c|c|c|} \hline
     $k$ & 2 & 3 & 4 & 5 & 6 & 7 & 8 & 9 & 10 & 11 & 12  \\ \hline 
    $d^{T}_{k}$ & 1 & 1 & 2 & 2 & 4 & 5 & 9 & 10 & 19& 23 & 42  \\ \hline 
    $l_{k}$ & 1 & 1 & 2 &2&5&6&16&14&49& 44 & 169 \\ \hline 
    $2^{k-2}$ & 1 & 2 & 4 &8&16&32&64&128&256& 512& 1024  \\ \hline
  \end{tabular}
\end{table}
On weight $6$, the last equation that cannot be obtained by the method is
\begin{align*}
3T(2,4) + 2T(3,3) = -\frac{15}{7}T(6) + \frac{120}{7}T(1,5) + \frac{60}{7}T(2,4)  + \frac{20}{7}T(3,3).
\end{align*}
This equation is written in \cite{KT} and no proof is given in that paper.
\subsection{Log-tangent integrals involving the Bernoulli polynomials and the Euler polynomials}
The equation (\ref{eq:Tnk}), obtained from the formula (\ref{eq:TtoA}) and the formula (\ref{eq:AKi}), can be rewritten into an expression involving the Bernoulli polynomials and the Euler polynomials. The first is the following theorem on the Euler polynomials.
\begin{theorem}\label{th:genT1}
For $|X| < 1$, we have
\begin{align}
&2\sum_{b=2}^{\infty}\sum_{k=1}^{b-1}\frac{(1-2^{b-k})B_{b-k}}{(b-k)!}\left(- \pi i\right)^{b-k-1}T(\{1\}^{a-1},k+1)X^b \label{eq:genT1}\\
&=\frac{i^{a}}{(a-1)!}X\sec\left(\frac{\pi X}{2}\right)\int_{0}^{\pi/2}\theta^{a-1}e^{-X\log{\tan(\theta/2)}}\,d\theta \nonumber\\
&\quad-\frac{i}{a!}\int_{0}^{\pi/2}\frac{2X^{2}e^{-i\theta X}}{e^{-\pi i X} + 1}\left(\frac{\pi}{2}i-\log{\tan{\frac{\theta}{2}}}\right)^{a}\,d\theta \nonumber\\
&\quad-\frac{(i\pi/2)^{a}}{a!}X\sec\left(\frac{\pi X}{2}\right), \nonumber
\end{align}
where $B_{k}$ denotes the $k$-th Bernoulli number.
\end{theorem}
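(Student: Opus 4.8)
The plan is to start from equation~(\ref{eq:Tnk}), which already expresses $T(\{1\}^{a-1},k+1)$ as a sum of two single integrals over $[0,\pi/2]$ plus an explicit product term, and to package the whole family $\{T(\{1\}^{a-1},k+1)\}_{k\ge1}$ into a generating function in $X$. The left-hand side of~(\ref{eq:genT1}) carries the combination $\sum_{k}\frac{(1-2^{b-k})B_{b-k}}{(b-k)!}(-\pi i)^{b-k-1}$, and the factor $(1-2^{m})B_{m}/m!$ is the signature of the generating series $\frac{1}{e^{u}+1}$ (equivalently $\tfrac12\sech$-type expansions via Euler numbers). So first I would fix $a$, multiply the $k$-th instance of~(\ref{eq:Tnk}) by the appropriate power of $X$, and sum over $k\ge1$, aiming to recognize the three resulting sums as the three terms on the right-hand side.

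The key computation is to interchange summation and integration in each of the two integrals in~(\ref{eq:Tnk}) after attaching the $X$-weights. For the first integral, $\frac{i^{a}}{(a-1)!}\int_{0}^{\pi/2}\theta^{a-1}(-\tfrac{\pi}{2}i-\log\tan\tfrac{\theta}{2})^{k}\,d\theta$ summed against $X^{k}/k!$-type weights should collapse via $\sum_k \frac{t^k}{k!}=e^{t}$ into $\int_0^{\pi/2}\theta^{a-1}e^{-X\log\tan(\theta/2)}\,d\theta$ times an exponential in $-\tfrac{\pi}{2}iX$; combining that exponential prefactor with the analogous contribution from the explicit product term $-\frac{(i\pi/2)^a}{a!}\frac{(-i\pi/2)^k}{k!}$ is what produces the $\sec(\pi X/2)$ factors, since $\sec(\pi X/2)$ is (up to normalization) $2/(e^{-\pi i X}+e^{\pi i X})$. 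For the second integral I would similarly sum $\frac{(-i)^{k}}{k!(k-1)!}\theta^{k-1}(\tfrac{\pi}{2}i-\log\tan\tfrac{\theta}{2})^{a}$-weighted terms; the mismatch between $(k-1)!$ in~(\ref{eq:Tnk}) and the $k!$ needed for a clean exponential is exactly what introduces the extra factor of $X^{2}$ and the kernel $\frac{2e^{-i\theta X}}{e^{-\pi i X}+1}$, since $\sum_{k\ge1}\frac{z^{k}}{(k-1)!}=z e^{z}$ and the geometric/Euler series $\frac{1}{e^{-\pi i X}+1}$ supplies the Bernoulli-number coefficients on the left.

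The main obstacle will be bookkeeping the interchange of the $k$-sum with the integrals and matching the precise Bernoulli-number generating identity on the left to the $\sec$ and $1/(e^{-\pi i X}+1)$ kernels on the right, i.e.\ verifying that the coefficient $\frac{(1-2^{b-k})B_{b-k}}{(b-k)!}(-\pi i)^{b-k-1}$ arising from expanding $X\sec(\pi X/2)$ and $\frac{2X^2}{e^{-\pi i X}+1}$ in powers of $X$ agrees term-by-term. I would therefore justify convergence for $|X|<1$ (using the absolute convergence of the log-tangent integrals guaranteed after Definition of $\Lt$, together with $|\pi X/2|<\pi/2$ so that $\sec(\pi X/2)$ has no pole), then expand both analytic kernels as power series in $X$, extract the coefficient of $X^{b}$ on each side, and check that the $k$-sums coincide. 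The final step is to read off~(\ref{eq:genT1}) by equating these $X^{b}$-coefficients; since every manipulation is a rearrangement of absolutely convergent series and integrals, no delicate analytic estimate beyond the domain $|X|<1$ is needed.
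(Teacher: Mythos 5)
Your proposal follows essentially the same route as the paper: start from equation~(\ref{eq:Tnk}), attach the $X$-weights, use the generating identity $\sum_{k\ge1}\frac{(1-2^{k})B_{k}}{k!}(-\pi i X)^{k}=\frac{-\pi iX}{e^{-\pi iX}+1}$ to turn the Cauchy product over $b$ and $k$ into that kernel times the exponential resummation of each integral, and combine $e^{-i\pi X/2}/(e^{-\pi iX}+1)$ into $\tfrac12\sec(\pi X/2)$. The only quibble is the slip writing $\sec(\pi X/2)$ as $2/(e^{-\pi iX}+e^{\pi iX})$ (it is $2e^{-i\pi X/2}/(e^{-\pi iX}+1)$), which does not affect the argument.
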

Note that Theorem \ref{th:genT1} will be proved only by the equation (\ref{eq:Tnk}) and calculation of formal power series. Since $B_{1} = -1/2$ and $B_{2k+1} = 0$ $(k \ge 1)$,  the real part of the coefficient of $X^{b}$ $(b \in \mathbb{Z}_{\ge 2})$ of the left-hand side of (\ref{eq:genT1}) is equal to $T(\{1\}^{a-1}, b)$. Therefore, putting $a=2n+1$ $(n \in \mathbb{Z}_{\ge 0})$ and taking the real part of the coefficient of $X^{b}$, we obtain
\begin{align*}
T(\{1\}^{2n}, b) = \Re\left(\frac{-i}{(2n+1)!}\frac{(-\pi i)^{b-2}}{(b-2)!}\int_{0}^{\pi/2}E_{b-2}\left(\frac{\theta}{\pi}\right)\left(\frac{\pi}{2}i-\log{\tan{\frac{\theta}{2}}}\right)^{2n+1}\,d\theta\right),
\end{align*}
where $E_{k}(t)$ denotes the $k$-th Euler polynomial defined by the generating function
\begin{align*}
\frac{2e^{tX}}{e^{X}+1} = \sum_{k=0}^{\infty} E_{k}(t)\frac{X^{k}}{k!}.
\end{align*}
Therefore, for $k \in \mathbb{Z}_{\ge 1}$, we obtain the following corollary.
\begin{corollary}\label{co:T(2n,k)} For $n \in \mathbb{Z}_{\ge 0}$ and $k \in \mathbb{Z}_{\ge 1}$, we have
\begin{align*}
&T(\{1\}^{2n}, 2k) \\
&=\frac{\pi^{2k-2}}{(2k-2)!}\sum_{m=0}^{n}\frac{(-1)^{k+m-1}(\pi/2)^{2m+1}}{(2m+1)!(2n-2m)!}\int_{0}^{\pi/2}E_{2k-2}\left(\frac{\theta}{\pi}\right)\left(\log{\tan{\frac{\theta}{2}}}\right)^{2n-2m}\,d\theta
\end{align*}
and
\begin{align*}
&T(\{1\}^{2n}, 2k+1) \\
&=\frac{\pi^{2k-1}}{(2k-1)!}\sum_{m=0}^{n}\frac{(-1)^{k+m-1}(\pi/2)^{2m}}{(2m)!(2n+1-2m)!}\int_{0}^{\pi/2}E_{2k-1}\left(\frac{\theta}{\pi}\right)\left(\log{\tan{\frac{\theta}{2}}}\right)^{2n+1-2m}\,d\theta.
\end{align*}
\end{corollary}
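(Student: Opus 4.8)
The plan is to start directly from the formula displayed immediately before the statement,
\[
T(\{1\}^{2n}, b) = \Re\left(\frac{-i}{(2n+1)!}\frac{(-\pi i)^{b-2}}{(b-2)!}\int_{0}^{\pi/2}E_{b-2}\left(\frac{\theta}{\pi}\right)\left(\frac{\pi}{2}i-\log{\tan{\frac{\theta}{2}}}\right)^{2n+1}\,d\theta\right),
\]
and to compute the indicated real part explicitly, once with $b = 2k$ and once with $b = 2k+1$. Writing $L := \log\tan(\theta/2)$, note that $E_{b-2}(\theta/\pi)$ and $L$ are real for $\theta \in (0,\pi/2)$, so the whole computation reduces to bookkeeping of powers of $i$: those coming from the scalar $(-i)(-\pi i)^{b-2}$ in front, and those coming from the binomial expansion of the integrand.

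First I would expand by the binomial theorem,
\[
\left(\tfrac{\pi}{2}i - L\right)^{2n+1} = \sum_{j=0}^{2n+1}\binom{2n+1}{j}\left(\tfrac{\pi}{2}\right)^{j} i^{j}(-L)^{2n+1-j}.
\]
The real part of the right-hand side collects the even indices $j = 2m$, where $\Re(i^{2m}) = (-1)^m$ while $(-L)^{2n+1-2m} = -L^{2n+1-2m}$ since the exponent is odd; the imaginary part collects the odd indices $j = 2m+1$, where $\Im(i^{2m+1}) = (-1)^m$ while $(-L)^{2n-2m} = L^{2n-2m}$ since the exponent is even. In both cases $m$ ranges over $0,\dots,n$.

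Next I would separate the two parities of $b$, the decisive point being the nature of the scalar prefactor. For $b = 2k$ one has $(-\pi i)^{2k-2} = (-1)^{k-1}\pi^{2k-2}$, so $(-i)(-\pi i)^{2k-2}$ is purely imaginary; hence taking the overall real part selects the imaginary part of the integral, i.e.\ the odd-$j$ sum above. For $b = 2k+1$ one computes $(-\pi i)^{2k-1} = (-1)^{k}\pi^{2k-1}i$, so $(-i)(-\pi i)^{2k-1} = (-1)^{k}\pi^{2k-1}$ is real, and taking the real part selects the real part of the integral, i.e.\ the even-$j$ sum. In each case the surviving binomial coefficient combines with $(2n+1)!$ through $\binom{2n+1}{j}/(2n+1)! = 1/(j!\,(2n+1-j)!)$ to produce exactly the factorials appearing in the denominators of the two claimed sums, the powers $(\pi/2)^{2m+1}$ and $(\pi/2)^{2m}$ come out with the correct exponents, and the remaining powers of $L$ are $L^{2n-2m}$ and $L^{2n+1-2m}$ as stated.

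The only thing to watch is the sign, and I expect this to be the one place where care is needed rather than any genuine obstacle. In the even case the accumulated sign is $(-1)^{k-1}(-1)^{m} = (-1)^{k+m-1}$; in the odd case it is $(-1)^{k}(-1)^{m}$ multiplied by the extra $(-1)$ arising from $(-L)^{2n+1-2m} = -L^{2n+1-2m}$, giving $(-1)^{k+m+1} = (-1)^{k+m-1}$. Since both reduce to $(-1)^{k+m-1}$, the two displayed formulas follow.
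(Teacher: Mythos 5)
Your proposal is correct and is precisely the computation the paper leaves implicit: the paper derives the corollary by specializing the displayed formula for $T(\{1\}^{2n},b)$ at $b=2k$ and $b=2k+1$ and extracting the real part, exactly as you do. Your sign bookkeeping (both cases reducing to $(-1)^{k+m-1}$) and the factorial identity $\binom{2n+1}{j}/(2n+1)! = 1/(j!\,(2n+1-j)!)$ check out.
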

In particular, when $n=0$, we have
\begin{align}
T(2k) 
&=(-1)^{k-1}\frac{\pi^{2k-1}}{2(2k-2)!}\int_{0}^{\pi/2}E_{2k-2}\left(\frac{\theta}{\pi}\right)\,d\theta 
=(-1)^{k}\frac{(1-2^{2k})B_{2k}}{(2k)!}\pi^{2k} \label{eq:T(2k)}
\end{align}
and
\begin{align}
&T(2k+1)=(-1)^{k-1}\frac{\pi^{2k-1}}{(2k-1)!}\int_{0}^{\pi/2}E_{2k-1}\left(\frac{\theta}{\pi}\right)\log{\tan{\frac{\theta}{2}}}\,d\theta \label{eq:T(2k+1)}.
\end{align}
Noting $T(k) = 2(1-1/2^{k})\zeta(k)$, the formula (\ref{eq:T(2k)}) is the same as the famous formula
\begin{align*}
\zeta(2k) = \frac{(-1)^{k+1}B_{2k}}{2 (2k)!}(2\pi)^{2k}.
\end{align*}
The formula (\ref{eq:T(2k+1)}) is the same as
\begin{align*}
\int_{0}^{\pi/4}E_{2n-1}\left(\frac{2}{\pi}x\right)\log(\tan{x})\,dx = \frac{(-1)^{n-1}(2n-1)!}{\pi^{2n-1}}\left(1-\frac{1}{2^{2n+1}}\right)\zeta(2n+1)
\end{align*}
which was proved in \cite{EG}.
\begin{proof}[Proof of Theorem \ref{th:genT1}]
By the formula (\ref{eq:Tnk}), we have
\begin{align*}
&\sum_{k=1}^{b-1}\frac{(1-2^{b-k})B_{b-k}}{(b-k)!}\left(-\pi i\right)^{b-k-1}T(\{1\}^{a-1},k+1)\\
&=\frac{i^{a}}{(a-1)!}\sum_{k=0}^{b-1}\frac{(1-2^{b-k})B_{b-k}}{k!(b-k)!}\left(-\pi i\right)^{b-k-1}\int_{0}^{\pi/2}\theta^{a-1}\left(-\frac{\pi}{2}i-\log{\tan{\frac{\theta}{2}}}\right)^{k}\,d\theta \\
&\quad+\frac{1}{a!}\sum_{k=1}^{b-1}\frac{(-i)^{k}(1-2^{b-k})B_{b-k}}{(k-1)!(b-k)!}\left(-\pi i\right)^{b-k-1}\int_{0}^{\pi/2}\theta^{k-1}\left(\frac{\pi}{2}i-\log{\tan{\frac{\theta}{2}}}\right)^{a}\,d\theta\\
&\quad-\frac{(i\pi/2)^{a}}{a!}\sum_{k=0}^{b-1}\frac{(1-2^{b-k})B_{b-k}}{(b-k)!}\left(-\pi i\right)^{b-k-1}\frac{(-i\pi/2)^{k}}{k!}.
\end{align*}
Since
\begin{align*}
\sum_{k=1}^{\infty}\frac{(1-2^{k})B_{k}}{k!}\left(-\pi iX\right)^{k} = \frac{\pi  X}{2}\tan\left(\frac{\pi X}{2}\right) -\frac{\pi X}{2}i = \frac{-\pi i X}{e^{- \pi i X} + 1},
\end{align*}
it follows that
\begin{align*}
&\sum_{b=2}^{\infty}\sum_{k=1}^{b-1}\frac{(1-2^{b-k})B_{b-k}}{(b-k)!}\left(-\pi i\right)^{b-k-1}T(\{1\}^{a-1},k+1)X^b\\
&=\frac{i^{a}}{(a-1)!}\frac{ X}{e^{-\pi i X} + 1}\int_{0}^{\pi/2}\theta^{a-1}e^{\left(-i\pi/2-\log{\tan{(\theta/2)}}\right)X}\,d\theta\\
&\quad-\frac{i}{a!}\frac{ X}{e^{- \pi i X} + 1}\int_{0}^{\pi/2}Xe^{-i\theta X}\left(\frac{\pi}{2}i-\log{\tan{\frac{\theta}{2}}}\right)^{a}\,d\theta\\
&\quad-\frac{(i\pi/2)^{a}}{a!}\frac{ X}{e^{- \pi i X} + 1}e^{(-i\pi/2) X}.
\end{align*}
Therefore, the desired formula is obtained.
\end{proof}
The second is the following theorem on the Bernoulli polynomials.
\begin{theorem}\label{th:genT2}
For $|X| < 1$, we have
\begin{align}
&-2\sum_{b=1}^{\infty}\sum_{k=1}^{b}\frac{B_{b-k}}{(b-k)!}\left(- \pi i \right)^{b-k-1}T(\{1\}^{a-1},k+1)X^b \label{eq:genT2}\\
&=\frac{i^{a-1}}{(a-1)!}X\csc\left(\frac{\pi X}{2}\right)\int_{0}^{\pi/2}\theta^{a-1}e^{-X\log{\tan(\theta/2)}}\,d\theta \nonumber\\
&\quad+\frac{i}{a!}\int_{0}^{\pi/2}\frac{2X^{2} e^{-i\theta X}}{e^{- \pi i X} - 1}\left(\frac{\pi}{2}i-\log{\tan{\frac{\theta}{2}}}\right)^{a}\,d\theta \nonumber \\
&\quad+i\frac{(i\pi/2)^{a}}{a!}X\csc\left(\frac{\pi X}{2}\right),\nonumber
\end{align}
where $B_{k}$ denotes the $k$-th Bernoulli number.
\end{theorem}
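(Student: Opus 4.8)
The plan is to repeat the proof of Theorem~\ref{th:genT1} almost verbatim, replacing the generating function $\frac{-\pi iX}{e^{-\pi iX}+1}$ used there by the Bernoulli generating function $\frac{-\pi iX}{e^{-\pi iX}-1}$, and to track the single place where the two computations diverge. As in that proof, I would first insert the explicit formula (\ref{eq:Tnk}) for $T(\{1\}^{a-1},k+1)$ into the double sum on the left-hand side of (\ref{eq:genT2}), splitting it into three pieces corresponding to the three terms of (\ref{eq:Tnk}). In each piece I would substitute $j=b-k$ to decouple the summation over $j\ge0$, which carries the coefficients $\frac{B_{j}}{j!}(-\pi i)^{j-1}$, from the summation over $k\ge1$, which is the only one interacting with the integration variable $\theta$.

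The single new ingredient is the identity
\[
\sum_{j=0}^{\infty}\frac{B_{j}}{j!}(-\pi i X)^{j}=\frac{-\pi i X}{e^{-\pi i X}-1},
\]
which replaces $\sum_{k\ge1}\frac{(1-2^{k})B_{k}}{k!}(-\pi iX)^{k}=\frac{-\pi iX}{e^{-\pi iX}+1}$ from the proof of Theorem~\ref{th:genT1}. After the reindexing, the $j$-summation contributes this factor, and together with the leftover $(-\pi i)^{-1}$ it produces the common prefactor $\frac{X}{e^{-\pi i X}-1}$. Since the $k$-summations start at $k=1$, they telescope into $e^{(\cdots)X}-1$ rather than a bare exponential: the first piece yields $\int_{0}^{\pi/2}\theta^{a-1}\big(e^{(-\frac{\pi}{2}i-\log\tan(\theta/2))X}-1\big)\,d\theta$, the third yields $e^{-i\pi X/2}-1$, and the middle piece yields $e^{-i\theta X}$ exactly as for Theorem~\ref{th:genT1}.

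The one genuinely new point --- and the reason the range of summation is $\sum_{k=1}^{b}$, $\sum_{b\ge1}$ rather than $\sum_{k=1}^{b-1}$, $\sum_{b\ge2}$ --- is the constant term $B_{0}=1$. For Theorem~\ref{th:genT1} the analogous boundary contribution vanished automatically because $1-2^{0}=0$; here it does not. Concretely, the two stray $-1$'s from the telescoped exponentials produce spurious summands $-\frac{(i\pi/2)^{a}}{a!}\frac{X}{e^{-\pi i X}-1}$ in the first piece (after evaluating $\int_{0}^{\pi/2}\theta^{a-1}\,d\theta=\frac{(\pi/2)^{a}}{a}$) and $+\frac{(i\pi/2)^{a}}{a!}\frac{X}{e^{-\pi i X}-1}$ in the third; I would verify that these cancel, leaving the clean exponentials. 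To finish, I would convert the denominator via $e^{-\pi i X}-1=-2i\,e^{-\pi i X/2}\sin(\frac{\pi X}{2})$, so that $\frac{Xe^{-i\pi X/2}}{e^{-\pi i X}-1}=\frac{iX}{2}\csc(\frac{\pi X}{2})$; applying this to the first and third pieces and multiplying the identity through by $-2$ produces the two $X\csc(\frac{\pi X}{2})$ terms of (\ref{eq:genT2}), while the middle piece is already in the stated form. Everything is a manipulation of convergent formal power series, so I expect the cancellation of these $B_{0}$-terms to be the only step that needs real care.
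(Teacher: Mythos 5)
Your proposal is correct and follows essentially the same route as the paper: substitute the explicit formula (\ref{eq:Tnk}) into the double sum, decouple the $j=b-k$ summation via the Bernoulli generating function $\sum_{j\ge0}\frac{B_j}{j!}(-\pi iX)^j=\frac{-\pi iX}{e^{-\pi iX}-1}$, and observe that the $k=0$ (equivalently, the constant-term) contributions of the first and third pieces cancel, which the paper implements by simply extending those two sums to $k=0$. The final conversion $e^{-\pi iX}-1=-2ie^{-\pi iX/2}\sin(\pi X/2)$ yielding the $\csc$ factors is also exactly what the paper does implicitly.
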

The real part of the coefficient of $X^{b}$ $(b \in \mathbb{Z}_{\ge 2})$ of the left-hand side of (\ref{eq:genT2}) is equal to $T(\{1\}^{a-1}, b)$. 
Therefore, putting $a=2n$ $(n \in \mathbb{Z}_{\ge 1})$ and taking the real part of the coefficient of $X^{b}$, we obtain
\begin{align*}
T(\{1\}^{2n-1}, b) = \Re\left(-\frac{2}{\pi(2n)!}\frac{(-\pi i)^{b-1}}{(b-1)!}\int_{0}^{\pi/2}B_{b-1}\left(\frac{\theta}{\pi}\right)\left(\frac{\pi}{2}i-\log{\tan{\frac{\theta}{2}}}\right)^{2n}\,d\theta\right),
\end{align*}
where $B_{k}(t)$ denotes the $k$-th Bernoulli polynomial defined by the generating function
\begin{align*}
\frac{Xe^{tX}}{e^{X}-1} = \sum_{k=0}^{\infty} B_{k}(t)\frac{X^{k}}{k!}.
\end{align*}
Therefore, for $k \in \mathbb{Z}_{\ge 1}$, we obtain the following corollary.
\begin{corollary}\label{co:T(2n+1,k)} For $n \in \mathbb{Z}_{\ge 1}$ and $k \in \mathbb{Z}_{\ge 1}$, we have
\begin{align*}
&T(\{1\}^{2n-1}, 2k) \\
&= \frac{\pi^{2k-1}}{(2k-1)!}\sum_{m=0}^{n-1}\frac{(-1)^{k+m-1}(\pi/2)^{2m}}{(2m+1)!(2n-2m-1)!}\int_{0}^{\pi/2}B_{2k-1}\left(\frac{\theta}{\pi}\right)\left(\log{\tan{\frac{\theta}{2}}}\right)^{2n-2m-1}\,d\theta
\end{align*}
and
\begin{align*}
&T(\{1\}^{2n-1}, 2k+1) \\
&=\frac{\pi^{2k}}{(2k)!}\sum_{m=0}^{n-1}\frac{(-1)^{k+m-1}(\pi/2)^{2m-1}}{(2m)!(2n-2m)!}\int_{0}^{\pi/2}B_{2k}\left(\frac{\theta}{\pi}\right)\left(\log{\tan{\frac{\theta}{2}}}\right)^{2n-2m}\,d\theta.
\end{align*}
\end{corollary}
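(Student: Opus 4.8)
The plan is to obtain both identities as specializations of the single auxiliary formula
\begin{equation*}
T(\{1\}^{2n-1}, b) = \Re\left(-\frac{2}{\pi(2n)!}\frac{(-\pi i)^{b-1}}{(b-1)!}\int_{0}^{\pi/2}B_{b-1}\left(\frac{\theta}{\pi}\right)\left(\frac{\pi}{2}i-\log\tan\frac{\theta}{2}\right)^{2n}d\theta\right),
\end{equation*}
which is recorded just above the statement; for completeness I would first re-derive it from Theorem \ref{th:genT2} with $a=2n$. Expanding the kernel of the middle integral through the Bernoulli generating function, i.e. substituting $Y=-\pi i X$ and $t=\theta/\pi$ in $Ye^{tY}/(e^{Y}-1)=\sum_{j\ge0}B_{j}(t)Y^{j}/j!$, turns $\frac{2X^{2}e^{-i\theta X}}{e^{-\pi i X}-1}$ into $\frac{2i}{\pi}\sum_{j\ge0}B_{j}(\theta/\pi)\frac{(-\pi i)^{j}}{j!}X^{j+1}$, so that the coefficient of $X^{b}$ in the middle term is exactly the bracketed expression above. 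The first and third terms of (\ref{eq:genT2}) carry the scalars $i^{2n-1}$ and $i(i\pi/2)^{2n}$, both purely imaginary, multiplied by $X\csc(\pi X/2)$ and a $\log$-expansion integral, each of which has real power-series coefficients in $X$; hence those two terms contribute nothing to the real part and drop out. On the left, only the $k=b-1$ summand is real (using $B_{1}=-1/2$, $B_{2j+1}=0$ for $j\ge1$, and the parity of $(-\pi i)^{b-k-1}$), leaving $T(\{1\}^{2n-1},b)$.

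Next I would specialize $b$. In the case $b=2k$ one has $(-\pi i)^{b-1}=(-\pi i)^{2k-1}=(-1)^{k}\pi^{2k-1}i$ purely imaginary, so the outer constant is imaginary and taking the real part of the product selects $-\Im$ of the integral; in the case $b=2k+1$ one has $(-\pi i)^{2k}=(-1)^{k}\pi^{2k}$ real, so the real part selects $\Re$ of the integral. In both cases I would expand $(\frac{\pi}{2}i-\log\tan\frac{\theta}{2})^{2n}$ by the binomial theorem: the even powers $(\frac{\pi}{2}i)^{2m}=(-1)^{m}(\pi/2)^{2m}$ supply the real part (giving $(\log\tan\frac{\theta}{2})^{2n-2m}$), while the odd powers $(\frac{\pi}{2}i)^{2m+1}=(-1)^{m}(\pi/2)^{2m+1}i$ supply the imaginary part (giving $(\log\tan\frac{\theta}{2})^{2n-2m-1}$). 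Collecting the binomial coefficients $\binom{2n}{2m}$ or $\binom{2n}{2m+1}$ against the prefactors and simplifying the powers of $\pi/2$ then yields the two displayed sums, the sign $(-1)^{k+m-1}$ emerging from combining $(-1)^{k}$, $(-1)^{m}$, and the overall $-2/\pi$.

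The only genuinely nontrivial point, and the step I expect to need care, is the range of summation in the case $b=2k+1$: the binomial expansion a priori produces an $m=n$ term with exponent $2n-2m=0$, namely $\int_{0}^{\pi/2}B_{2k}(\theta/\pi)\,d\theta$. I would show this vanishes for $k\ge1$ via the substitution $u=\theta/\pi$ and the antiderivative $\int_{0}^{1/2}B_{2k}(u)\,du=\frac{B_{2k+1}(1/2)-B_{2k+1}(0)}{2k+1}$, together with $B_{2k+1}(0)=B_{2k+1}=0$ and $B_{2k+1}(1/2)=(2^{-2k}-1)B_{2k+1}=0$; hence the sum truncates at $m=n-1$, matching the statement. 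The case $b=2k$ needs no such truncation, since its imaginary-part expansion already runs over $0\le m\le n-1$ automatically (as $2m+1\le2n$). The remainder is routine factorial and sign bookkeeping, which I would organize so that $\binom{2n}{2m}/(2n)!=1/((2m)!(2n-2m)!)$ cancels the $(2n)!$ in the prefactor cleanly.
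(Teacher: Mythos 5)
Your proposal is correct and takes essentially the same route as the paper: extract $T(\{1\}^{2n-1},b)=\Re\bigl(-\tfrac{2}{\pi(2n)!}\tfrac{(-\pi i)^{b-1}}{(b-1)!}\int_{0}^{\pi/2}B_{b-1}(\theta/\pi)(\tfrac{\pi}{2}i-\log\tan\tfrac{\theta}{2})^{2n}\,d\theta\bigr)$ from Theorem \ref{th:genT2} with $a=2n$ by taking the real part of the coefficient of $X^{b}$, then specialize $b=2k$ and $b=2k+1$ and expand binomially. Your truncation of the second sum via $\int_{0}^{\pi/2}B_{2k}(\theta/\pi)\,d\theta=0$ is exactly the remark the paper makes immediately after the corollary, so no further comment is needed.
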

The second equation follows from $\int_{0}^{\pi/2}B_{2k}\left(\theta/\pi\right)\,d\theta = 0$. In particular, when $n=1$, we have
\begin{align}
T(1, 2k) = \frac{(-1)^{k-1}\pi^{2k-1}}{(2k-1)!}\int_{0}^{\pi/2}B_{2k-1}\left(\frac{\theta}{\pi}\right)\log{\tan{\frac{\theta}{2}}}\,d\theta \label{eq:T(1,2k)=}
\end{align}
and
\begin{align*}
T(1, 2k+1)
&=\frac{(-1)^{k-1}\pi^{2k-1}}{(2k)!}\int_{0}^{\pi/2}B_{2k}\left(\frac{\theta}{\pi}\right)\left(\log{\tan{\frac{\theta}{2}}}\right)^{2}\,d\theta.
\end{align*}
In \cite[equation (2.3)]{EG2}, it was shown that
\begin{align}
\int_{0}^{\pi/2}B_{2m-1}\left(\frac{2}{\pi}x\right)\log{\tan{x}}\,dx = \frac{2(-1)^{m-1}(2m-1)!}{(2\pi)^{2m-1}}\sum_{n=1}^{\infty}\frac{h_{n}}{n^{2m}}, \label{eq:T(1,2k)EG}
\end{align}
where $h_{n} = \sum_{k=1}^{n}(2k-1)^{-1}$. By substituting $x = \theta/2$ and using the symmetry $B_{2m-1}(1-x)\log{\tan(\pi/2-x)} = B_{2m-1}(x)\log{\tan{x}}$, we can see that the equation (\ref{eq:T(1,2k)=}) and the equation (\ref{eq:T(1,2k)EG}) are equivalent.
\begin{proof}[Proof of Theorem \ref{th:genT2}]
By the formula (\ref{eq:Tnk}), we have
\begin{align*}
&\sum_{k=1}^{b}\frac{B_{b-k}}{(b-k)!}\left(-\pi i \right)^{b-k-1}T(\{1\}^{a-1},k+1)\\
&=\frac{i^{a}}{(a-1)!}\sum_{k=0}^{b}\frac{B_{b-k}}{k!(b-k)!}\left(-\pi i \right)^{b-k-1}\int_{0}^{\pi/2}\theta^{a-1}\left(-\frac{\pi}{2}i-\log{\tan{\frac{\theta}{2}}}\right)^{k}\,d\theta\\
&\quad+\frac{1}{a!}\sum_{k=1}^{b}\frac{(-i)^{k}B_{b-k}}{(k-1)!(b-k)!}\left(- \pi i \right)^{b-k-1}\int_{0}^{\pi/2}\theta^{k-1}\left(\frac{\pi}{2}i-\log{\tan{\frac{\theta}{2}}}\right)^{a}\,d\theta\\
&\quad-\frac{(i\pi/2)^{a}}{a!}\sum_{k=0}^{b}\frac{B_{b-k}}{(b-k)!}\left(-\pi i \right)^{b-k-1}\frac{(-i\pi/2)^{k}}{k!}.
\end{align*}
Since
\begin{align*}
\sum_{k=0}^{\infty}\frac{B_{k}}{k!}\left(- \pi i \right)^{k} X^{k} = \frac{\pi X}{2}\cot\left(\frac{\pi X}{2}\right)+\frac{\pi X}{2}i = \frac{-\pi i X}{e^{- \pi i X} - 1},
\end{align*}
it follows that
\begin{align*}
&\sum_{b=1}^{\infty}\sum_{k=1}^{b}\frac{B_{b-k}}{(b-k)!}\left(- \pi i \right)^{b-k-1}T(\{1\}^{a-1},k+1) X^b\\
&=\frac{i^{a}}{(a-1)!}\frac{ X}{e^{-\pi i X} - 1}\int_{0}^{\pi/2}\theta^{a-1}e^{\left(-i\pi/2-\log{\tan(\theta/2)}\right)X}\,d\theta\\
&\quad-\frac{i}{a!}\frac{X}{e^{-\pi i X} - 1}\int_{0}^{\pi/2}Xe^{-i\theta X}\left(\frac{\pi}{2}i-\log{\tan{\frac{\theta}{2}}}\right)^{a}\,d\theta\\
&\quad-\frac{(i\pi/2)^{a}}{a!}\frac{X}{e^{- \pi i X} - 1}e^{(-i\pi /2)X}.
\end{align*}
Therefore, the desired formula is obtained.
\end{proof}

\section{relation between multiple $A$-values and iterated log-tangent integrals}\label{sec:AandLt}
In this section, we evaluate iterated log-tangent integrals in terms of multiple $T$-values and multiple $A$-functions and prove Theorem \ref{th:L=A}.
\subsection{Evaluation iterated log-tangent integrals}
This subsection proves that every iterated log-tangent integral can be written in terms of multiple $T$-values and multiple $A$-functions. More precisely, we prove the following theorem.
\begin{theorem}\label{th:LttoF}
For $\sigma \in [0,\pi]$, $\mathbf{k} \in \mathbb{Z}_{\ge 0}^{n}$ and  $\mathbf{l}  \in \mathbb{Z}_{\ge 0}^{n}$ such that $\mathbf{k} - \mathbf{1}_{n} \ge \mathbf{l}$, we have
\begin{align*}
\Lt_{\mathbf{k}}^{\mathbf{l}}(\sigma) = (-1)^{|\mathbf{k}|+n}i^{|\mathbf{l}|+n}\sum_{\mathbf{p}+\mathbf{r}=\mathbf{k}-\mathbf{1}_{n}-\mathbf{l}}\binom{\mathbf{k}-\mathbf{1}_{n}-\mathbf{l}}{\mathbf{p},\mathbf{r}}\left(-\frac{\pi }{2}i\right)^{|\mathbf{p}|}\tilde{F}_{\mathbf{l}}^{\mathbf{r}}(\sigma),
\end{align*}
where the sum is over all $\mathbf{p}\in \mathbb{Z}_{\ge0}^{n}$ and $\mathbf{r}\in \mathbb{Z}_{\ge0}^{n}$ satisfying $\mathbf{p}+\mathbf{r}=\mathbf{k}-\mathbf{1}_{n}-\mathbf{l}$, and
\[\binom{\mathbf{k}-\mathbf{1}_{n}-\mathbf{l}}{\mathbf{p},\mathbf{r}} = \prod_{u=1}^{n}\frac{(k_{u}-1-l_{u})!}{p_{u}!r_{u}!}.\]
\end{theorem}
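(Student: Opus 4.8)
The plan is to read off the identity directly from the nested-integral definition of $\Lt_{\mathbf{k}}^{\mathbf{l}}$ by expanding each log-tangent power around the constant shift $\tfrac{\pi}{2}i$. First I would note that the simplex $0<\theta_1<\cdots<\theta_n<\sigma$ meets the boundary of $[0,\pi]$ only on a set of measure zero, so throughout we may assume $\theta_u\in(0,\pi)$; there $\tan(\theta_u/2)>0$, the absolute value in the integrand is harmless, and $i\tan(\theta_u/2)$ lies on the positive imaginary axis. Hence, on the principal branch, $\log(i\tan(\theta_u/2))=\log\tan(\theta_u/2)+\tfrac{\pi}{2}i$, so that $\log\tan(\theta_u/2)=\log(i\tan(\theta_u/2))-\tfrac{\pi}{2}i$ with a shift independent of $\theta_u$. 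Fixing this branch uniformly on $(0,\pi)$ is what makes the whole computation unambiguous.

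The engine of the proof is the binomial theorem applied to this shift. For each $u$, one writes
\[(\log\tan(\theta_u/2))^{k_u-1-l_u}=\sum_{p_u+r_u=k_u-1-l_u}\binom{k_u-1-l_u}{p_u}\left(-\frac{\pi}{2}i\right)^{p_u}\bigl(\log(i\tan(\theta_u/2))\bigr)^{r_u}.\]
Substituting these expansions into the definition of $\Lt_{\mathbf{k}}^{\mathbf{l}}(\sigma)$ and interchanging the finite sums with the iterated integral (legitimate since all the integrals converge absolutely, as already recorded for iterated log-tangent integrals), the product over $u=1,\dots,n$ distributes into a sum over all $\mathbf{p},\mathbf{r}\in\mathbb{Z}_{\ge0}^{n}$ with $\mathbf{p}+\mathbf{r}=\mathbf{k}-\mathbf{1}_{n}-\mathbf{l}$. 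The per-variable binomial coefficients assemble into $\binom{\mathbf{k}-\mathbf{1}_{n}-\mathbf{l}}{\mathbf{p},\mathbf{r}}=\prod_{u}\binom{k_u-1-l_u}{p_u}$ and the scalars into $\left(-\frac{\pi}{2}i\right)^{|\mathbf{p}|}$, leaving the iterated integral
\[\int_{0<\theta_1<\cdots<\theta_n<\sigma}\prod_{u=1}^{n}\theta_u^{l_u}\bigl(\log(i\tan(\theta_u/2))\bigr)^{r_u}\,d\theta_u,\]
which by the definition of $\tilde F_{\mathbf{l}}^{\mathbf{r}}$ is a fixed scalar multiple of $\tilde F_{\mathbf{l}}^{\mathbf{r}}(\sigma)$.

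It then remains to collect constants: the factor $(-1)^{n}$ comes from the definition of $\Lt_{\mathbf{k}}^{\mathbf{l}}$, while the power $i^{|\mathbf{l}|+n}$ together with the weight parity is produced by rewriting the shifted-log integral in terms of $\tilde F_{\mathbf{l}}^{\mathbf{r}}$; assembling these yields the stated prefactor $(-1)^{|\mathbf{k}|+n}i^{|\mathbf{l}|+n}$. A useful internal check is that resumming the expansion over $\mathbf{p}$ telescopes,
\[\sum_{p+r=m}\binom{m}{p}\left(-\frac{\pi}{2}i\right)^{p}\bigl(\log(i\tan(\theta/2))\bigr)^{r}=(\log\tan(\theta/2))^{m},\]
recovering the original integrand and confirming that no spurious terms were introduced. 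I expect the only genuinely delicate point to be exactly this constant-and-sign bookkeeping — tracking the powers of $i$ arising from the shift, from the $\theta_u^{l_u}$ normalization in $\tilde F_{\mathbf{l}}^{\mathbf{r}}$, and from the depth $n$, and verifying they combine to the claimed prefactor for both parities of $|\mathbf{k}|$ — since the analytic content (branch choice, absolute convergence, and the elementary binomial expansion) is otherwise routine.
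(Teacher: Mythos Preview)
Your binomial-expansion step is correct and is exactly the opening move in the paper's proof: one writes $-\log\tan(\theta_u/2)=A(1;e^{i\theta_u})-\tfrac{\pi}{2}i$ on $(0,\pi)$, expands each power, and lands on a sum over $\mathbf{p}+\mathbf{r}=\mathbf{k}-\mathbf{1}_n-\mathbf{l}$ of the iterated integrals
\[
A_{\mathbf{l}}^{\mathbf{r}}(\sigma)=\int_{0<\theta_1<\cdots<\theta_n<\sigma}\prod_{u=1}^{n} i(i\theta_u)^{l_u}\bigl(A(1;e^{i\theta_u})\bigr)^{r_u}\,d\theta_u,
\]
together with the claimed prefactor $(-1)^{|\mathbf{k}|+n}i^{|\mathbf{l}|+n}$ and the $(-\tfrac{\pi}{2}i)^{|\mathbf{p}|}$ weights. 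So far your outline and the paper agree.

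The gap is in your sentence ``which by the definition of $\tilde F_{\mathbf{l}}^{\mathbf{r}}$ is a fixed scalar multiple of $\tilde F_{\mathbf{l}}^{\mathbf{r}}(\sigma)$.'' That is not how $\tilde F$ is defined. In the paper, $\tilde F_{\mathbf{q}}^{\mathbf{r}}(\sigma)$ is \emph{not} an integral at all: it is an explicit finite $\mathbb{Q}$-linear combination of terms $(i\sigma)^{m}\,T(\mathbf{k}_1)\,A(\mathbf{k}_2;e^{i\sigma})$, built combinatorially from auxiliary functions $\tilde f_{\mathbf{q}}^{\mathbf{r}}$ via a signed sum over ordered partitions of $(\mathbf{q},\mathbf{r})$. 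The whole content of the theorem is the evaluation $A_{\mathbf{l}}^{\mathbf{r}}(\sigma)=\tilde F_{\mathbf{l}}^{\mathbf{r}}(\sigma)$, i.e.\ that the iterated integral \emph{equals} this algebraic expression in multiple $T$-values and multiple $A$-functions. The paper establishes this by checking that $\tilde F_{\mathbf{q}}^{\mathbf{r}}$ satisfies the same first-order differential equation
\[
\frac{d}{d\sigma}\tilde F_{\mathbf{q}}^{\mathbf{r}}(\sigma)=i(i\sigma)^{q_n}\bigl(A(1;e^{i\sigma})\bigr)^{r_n}\tilde F_{\mathbf{q}_-}^{\mathbf{r}_-}(\sigma)
\]
as $A_{\mathbf{q}}^{\mathbf{r}}$, together with $\tilde F_{\mathbf{q}}^{\mathbf{r}}(0)=0$, and then inducts on depth. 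Your proposal contains none of this; it reduces the theorem to the identity $A_{\mathbf{l}}^{\mathbf{r}}=\tilde F_{\mathbf{l}}^{\mathbf{r}}$ and then asserts that identity as definitional. (A secondary issue: your shift uses $\log(i\tan(\theta/2))=\log\tan(\theta/2)+\tfrac{\pi}{2}i$, whereas $A(1;e^{i\theta})=-\log\tan(\theta/2)+\tfrac{\pi}{2}i$; these differ by more than a scalar, so even the ``constant-and-sign bookkeeping'' you flag would not reconcile your integral with $\tilde F$ without redoing the expansion around $A(1;e^{i\theta})$.)
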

The function $\tilde{F}_{\mathbf{l}}^{\mathbf{r}}(\sigma)$ will be defined in proof of Theorem \ref{th:LttoF} and is written as a $\mathbb{Q}$-linear combination of 
\begin{align*}
\left\{ (i\sigma)^{m} T(\mathbf{k}_{1}) A(\mathbf{k}_{2};e^{i \sigma}) \mathrel{}\middle|\mathrel{}  \begin{gathered}m \in \mathbb{Z}_{\ge 0},\ \mathbf{k}_{1}, \mathbf{k}_{2}: \text{admissible},\\
m+|\mathbf{k}_{1}| + |\mathbf{k}_{2}| = |\mathbf{r}|+|\mathbf{l}|+n,\\
{\rm dep}(\mathbf{k}_{1})+{\rm dep}(\mathbf{k}_{2}) = |\mathbf{r}|
\end{gathered}\right\}.
\end{align*}
Therefore, Theorem \ref{th:LttoF} states that $\Lt_{\mathbf{k}}^{\mathbf{l}}(\sigma)$ can be written as a $\mathbb{Q}$-linear combination of
\begin{align*}
\left\{ i^{|\mathbf{l}|+n}  (i\sigma)^{m} (\pi i)^{p}T(\mathbf{k}_{1}) A(\mathbf{k}_{2};e^{i \sigma}) \mathrel{}\middle|\mathrel{}  \begin{gathered}m, p \in \mathbb{Z}_{\ge 0},\ 
\mathbf{k}_{1}, \mathbf{k}_{2}: \text{admissible},\\
m+p+|\mathbf{k}_{1}| + |\mathbf{k}_{2}| = |\mathbf{k}|,\\
p + {\rm dep}(\mathbf{k}_{1})+{\rm dep}(\mathbf{k}_{2}) = |\mathbf{k}-\mathbf{1}_{n}-\mathbf{l}|
 \end{gathered}\right\}.
\end{align*}
Theorem \ref{th:LttoF} can be used for numerical evaluations of the iterated log-tangent integral. Theorem \ref{th:LttoF} is a log-tangent analogue of \cite[Theorem 1]{U2} and is proved in the same way as that theorem. 

First, we define $A_{\mathbf{q}}^{\mathbf{r}}(\sigma)$ as follows.
\begin{definition}
For $\mathbf{q} = (q_{1},\dots,q_{n}) \in \mathbb{Z}_{\ge 0}^{n}$, $\mathbf{r} = (r_{1},\dots,r_{n}) \in \mathbb{Z}_{\ge 0}^{n}$ and $\sigma \in [0,\pi]$, we define
\begin{align*}
A_{\mathbf{q}}^{\mathbf{r}}(\sigma) &= \int_{0}^{\sigma}\int_{0}^{\theta_{n}}\cdots\int_{0}^{\theta_{2}}
\prod_{u=1}^{n}i(i\theta_{u})^{q_{u}}\left(A(1;e^{i\theta_{u}})\right)^{r_{u}}\,d\theta_{1}\cdots d\theta_{n-1}d\theta_{n}\\
&=\int_{0}^{\sigma}i(i\theta_{n})^{q_{n}}\left(A(1;e^{i\theta_{n}})\right)^{r_{n}} A_{\mathbf{q}_{-}}^{\mathbf{r}_{-}}(\theta_{n})\,d\theta_{n},
\end{align*}
where $A_{\emptyset}^{\emptyset}(\sigma)$ is regarded as $1$.
\end{definition}
Then, from $A(1;e^{i \theta}) = -\log{\tan(\theta/2)} + i \pi/2$ for $\theta \in (0, \pi)$, we have
\begin{align*}
&(-1)^{|\mathbf{k}|+n}(-i)^{|\mathbf{l}|+n}\Lt_{\mathbf{k}}^{\mathbf{l}}(\sigma)\\
&=(-1)^{|\mathbf{k}|}(-i)^{|\mathbf{l}|+n}\int_{0}^{\sigma}\int_{0}^{\theta_{n}}\dots\int_{0}^{\theta_{2}}
\prod_{u=1}^{n}\theta_u^{l_u}\left(\log\tan{\frac{\theta_{u}}{2}}\right)^{k_u-1-l_u}\,d\theta_{1}\cdots d\theta_{n}\\
&=\int_{0}^{\sigma}\int_{0}^{\theta_{n}}\dots\int_{0}^{\theta_{2}}
\prod_{u=1}^{n}i(i \theta_u)^{l_u}\left(A(1;e^{i \theta}) - \frac{\pi}{2}i\right)^{k_u-1-l_u}\,d\theta_{1}\cdots d\theta_{n}\\
&= \sum_{\mathbf{p}+\mathbf{r}=\mathbf{k}-\mathbf{1}_{n}-\mathbf{l}}\binom{\mathbf{k}-\mathbf{1}_{n}-\mathbf{l}}{\mathbf{p},\mathbf{r}}\left(-\frac{\pi }{2}i\right)^{|\mathbf{p}|}A_{\mathbf{l}}^{\mathbf{r}}(\sigma).
\end{align*}
Therefore, to prove Theorem \ref{th:LttoF}, we need only prove that $A_{\mathbf{l}}^{\mathbf{r}}(\sigma) = \tilde{F}_{\mathbf{l}}^{\mathbf{r}}(\sigma)$.
In order to define $\tilde{F}_{\mathbf{l}}^{\mathbf{r}}(\sigma)$, we introduce several notations. The following notations are used with the same meaning as in \cite{U2}: the rational numbers $B_{\mathbf{q}}$, $C_{\mathbf{q}}^{\mathbf{j}}$, the relation between indices $\preceq$, the elements $w_{\mathbf{j}}^{\mathbf{r}}$ in $\mathfrak{H}$. For a pair of indices $\mathbf{q}, \mathbf{r}\in \mathbb{Z}_{\ge0}^{n}$, we also write
\begin{align*}
\mathbf{r}&=(\mathbf{r}',\mathbf{r}'') = (\underbrace{0, \cdots, 0}_{n'},r''_{1},\dots,r''_{n''})\ \ \ \, \in \mathbb{Z}_{\ge 0}^{n'+n''},\ (r''_{1} \ge 1),\\
\mathbf{q}&=(\mathbf{q}',\mathbf{q}'')=(q'_{1},\dots,q'_{n'},q''_{1},\dots,q''_{n''})\in \mathbb{Z}_{\ge 0}^{n'+n''},\\
\overline{\mathbf{q}} &= \begin{cases} (|\mathbf{q}'|+n'+q_{1}'',q_{2}'',\dots,q_{n''}'') &\text{if}\quad \mathbf{q}''\neq\emptyset, \\ 
\emptyset &\text{if}\quad \mathbf{q}''=\emptyset.
\end{cases}
\end{align*}
The function $\tilde{f}_{\mathbf{q}}^{\mathbf{r}}(\sigma)$, which is necessary to define $\tilde{F}_{\mathbf{q}}^{\mathbf{r}}(\sigma)$, is defined as follows.
\begin{definition}\label{def:ftilde}
For $\mathbf{q} \in \mathbb{Z}_{\ge 0}^{n}$, $\mathbf{r} \in \mathbb{Z}_{\ge 0}^{n}$ and $\sigma \in [0,\pi]$, we define
\begin{align*}
\tilde{f}_{\mathbf{q}}^{\mathbf{r}}(\sigma) = 
B_{\mathbf{q}'}\sum_{\mathbf{j} \preceq \overline{\mathbf{q}}}C_{\overline{\mathbf{q}}}^{\mathbf{j}}(i\sigma)^{|\mathbf{q}|+n'-|\mathbf{j}|}A(w_{\mathbf{j}}^{\mathbf{r}''};e^{i\sigma}),
\end{align*}
where the sum is over all $\mathbf{j}=(j_{1},\dots,j_{n''}) \in \mathbb{Z}_{\ge 0}^{n''}$ satisfying $\mathbf{j} \preceq \overline{\mathbf{q}}$.
\end{definition}
The function $\tilde{f}_{\mathbf{q}}^{\mathbf{r}}(\sigma)$ satisfies the following three formulas corresponding to equations (2.1), (2.2) and Proposition 1 in \cite{U2}, respectively: 
\begin{align*}
\tilde{f}_{\mathbf{q}}^{\mathbf{r}}(\sigma) &=
\begin{cases}
1 &{\rm if}\ \mathbf{r}=\emptyset,\\
B_{\mathbf{q}}(i\sigma)^{|\mathbf{q}|+n}
&{\rm if}\ \mathbf{r}=(\{0\}^{n}), n\ge 1,\\
B_{\mathbf{q}'}\sum_{\mathbf{j} \preceq \overline{\mathbf{q}}}C_{\overline{\mathbf{q}}}^{\mathbf{j}}(i\sigma)^{|\overline{\mathbf{q}}|-|\mathbf{j}|}A(w_{\mathbf{j}}^{\mathbf{r}''};e^{i\sigma})&{\rm otherwise}.
\end{cases}\\
\tilde{f}_{\mathbf{q}}^{\mathbf{r}}(0) &= 
\begin{cases}
1 &{\rm if}\ \mathbf{r}=\emptyset,\\
0&{\rm if}\ \mathbf{r}=(\{0\}^{n}), n\ge 1,\\
\begin{gathered}
B_{\mathbf{q}'}\sum_{\mathbf{j}_{-} \preceq \overline{\mathbf{q}}_{-}}C_{\overline{\mathbf{q}}_{-}}^{\mathbf{j}_{-}}(-1)^{|\overline{\mathbf{q}}|-|\mathbf{j}_{-}|}(|\overline{\mathbf{q}}|-|\mathbf{j}_{-}|)!\\
\times T\left((w_{\mathbf{j}_{-}}^{\mathbf{r}''_{-}} \shuffle e_{1}^{\shuffle r_{n}})e_{0}^{1+|\overline{\mathbf{q}}|-|\mathbf{j}_{-}|}\right)
\end{gathered}&{\rm otherwise}.
\end{cases}\\
\frac{d}{d\sigma}\tilde{f}_{\mathbf{q}}^{\mathbf{r}}(\sigma) &= i(i\sigma)^{q_{n}}\left(A\left(1;e^{i\sigma}\right)\right)^{r_{n}}\tilde{f}_{\mathbf{q}_{-}}^{\mathbf{r}_{-}}(\sigma).
\end{align*}
The function $\tilde{F}_{\mathbf{q}}^{\mathbf{r}}(\sigma)$ is defined as follows.
\begin{definition}
For $\mathbf{q} \in \mathbb{Z}_{\ge 0}^{n}$, $\mathbf{r} \in \mathbb{Z}_{\ge 0}^{n}$ and $\sigma \in [0,\pi]$, we define
\begin{align*}\tilde{F}_{\mathbf{q}}^{\mathbf{r}}(\sigma) =\sum_{\substack{(\mathbf{q}^{(1)},\dots,\mathbf{q}^{(h)})=\mathbf{q} \\ (\mathbf{r}^{(1)},\dots,\mathbf{r}^{(h)})=\mathbf{r}}}(-1)^{h-1}\left(\prod_{j=1}^{h-1}\tilde{f}_{\mathbf{q}^{(j)}}^{\mathbf{r}^{(j)}}(0)\right)\left(\tilde{f}_{\mathbf{q}^{(h)}}^{\mathbf{r}^{(h)}}(\sigma)-\tilde{f}_{\mathbf{q}^{(h)}}^{\mathbf{r}^{(h)}}(0)\right),
\end{align*}
where the sum is over all partitions of $\mathbf{q}$ and $\mathbf{r}$ that satisfy ${\rm dep}(\mathbf{q}^{(i)})={\rm dep}(\mathbf{r}^{(i)}) \ge 1$. For example, 
\begin{align*}
\tilde{F}_{q_{1}, q_{2}, q_{3}}^{(r_{1}, r_{2}, r_{3})}(\sigma) &= (\tilde{f}_{q_{1}, q_{2}, q_{3}}^{(r_{1}, r_{2}, r_{3})}(\sigma)-\tilde{f}_{q_{1}, q_{2}, q_{3}}^{(r_{1}, r_{2}, r_{3})}(0))\\
&\quad-\tilde{f}_{q_{1}}^{(r_{1})}(0)(\tilde{f}_{q_{2}, q_{3}}^{(r_{2}, r_{3})}(\sigma)-\tilde{f}_{q_{2}, q_{3}}^{(r_{2}, r_{3})}(0)) - \tilde{f}_{q_{1}, q_{2}}^{(r_{1}, r_{2})}(0)(\tilde{f}_{q_{3}}^{(r_{3})}(\sigma)-\tilde{f}_{q_{3}}^{(r_{3})}(0))\\
&\quad + \tilde{f}_{q_{1}}^{(r_{1})}(0)\tilde{f}_{q_{2}}^{(r_{2})}(0)(\tilde{f}_{q_{3}}^{(r_{3})}(\sigma)-\tilde{f}_{q_{3}}^{(r_{3})}(0)).
\end{align*}
\end{definition}
The function $\tilde{F}_{\mathbf{q}}^{\mathbf{r}}(\sigma)$ satisfy 
\begin{align*}
\frac{d}{d\sigma}\tilde{F}_{\mathbf{q}}^{\mathbf{r}}(\sigma) = i(i\sigma)^{q_{n}}\left(A\left(1;e^{i\sigma}\right)\right)^{r_{n}}\tilde{F}_{\mathbf{q}_{-}}^{\mathbf{r}_{-}}(\sigma)
\end{align*}
corresponding to \cite[Proposition 2]{U2}, where $\tilde{F}_{\emptyset}^{\emptyset}(\sigma)$ is regarded as $1$. Therefore, we can verify $A_{\mathbf{l}}^{\mathbf{r}}(\sigma) = \tilde{F}_{\mathbf{l}}^{\mathbf{r}}(\sigma)$ as in \cite[Proposition 3]{U2}, which completes the proof of Theorem \ref{th:LttoF}.
\subsection{Examples of Theorem \ref{th:LttoF}}
Here are some examples of Theorem \ref{th:LttoF}. Note that when $\mathbf{k} = (k)$ and $\mathbf{l} = (k-2)$, Theorem \ref{th:LttoF} becomes the simple formula:
\begin{align*}
\Lt_{k}^{(k-2)}(\sigma) &= -\frac{\pi i}{2(k-1)}\sigma^{k-1} + i^{k-1}(k-2)!T(k)\\
&\quad-\sum_{j=0}^{k-2}i^{j+1}\frac{(k-2)!}{(k-2-j)!}\sigma^{k-2-j}A(j+2;e^{i\sigma}).
\end{align*}
The following equations are examples of Theorem \ref{th:LttoF}:
\begin{align*}
&\Lt_{1}^{(0)}(\sigma) = -\sigma,\\
&\Lt_{2}^{(0)}(\sigma) = -\frac{1}{2} i \, \pi \sigma + i \, T\left(2\right) - i \, A\left(2;e^{i \, \sigma}\right),\\
&\Lt_{2}^{(1)}(\sigma) = -\frac{1}{2} \, \sigma^{2},\\
&\Lt_{1,1}^{(0,0)}(\sigma) = \frac{1}{2} \, \sigma^{2},\\
&\Lt_{3}^{(0)}(\sigma) = \frac{1}{4} \, \pi^{2} \sigma - \pi T\left(2\right) + \pi {A}\left(2;e^{i \, \sigma}\right) - 2 i \, T\left(1 , 2\right) + 2 i \, A\left(1,2;e^{i \, \sigma}\right),\\
&\Lt_{3}^{(1)}(\sigma) = -\frac{1}{4} i \, \pi \sigma^{2} - i \, \sigma A\left(2;e^{i \, \sigma}\right) - T\left(3\right) + A\left(3;e^{i \, \sigma}\right),\\
&\Lt_{3}^{(2)}(\sigma) = -\frac{1}{3} \, \sigma^{3},\\
&\Lt_{1,2}^{(0,0)}(\sigma) = \frac{1}{4} i \, \pi \sigma^{2} + i \, \sigma A\left(2;e^{i \, \sigma}\right) + T\left(3\right) - A\left(3;e^{i \, \sigma}\right),\\
&\Lt_{1,2}^{(0,1)}(\sigma) = \frac{1}{3} \, \sigma^{3},\\
&\Lt_{2,1}^{(0,0)}(\sigma) = \frac{1}{4} i \, \pi \sigma^{2} - i \, \sigma T\left(2\right) - T\left(3\right) + A\left(3;e^{i \, \sigma}\right),\\
&\Lt_{2,1}^{(1,0)}(\sigma) = \frac{1}{6} \, \sigma^{3},\\
&\Lt_{1,1,1}^{(0,0,0)}(\sigma) = -\frac{1}{6} \, \sigma^{3}.
\end{align*}
\subsection{Proof of Theorem \ref{th:L=A}} As a consequence of Theorem \ref{th:LttoF}, we give a proof of Theorem \ref{th:L=A}.
The inclusions $\mathcal{A}_{k}^{e} \subset \mathcal{L}^{e}_{k}$ and $\mathcal{A}_{k}^{o} \subset \mathcal{L}^{o}_{k}$ have already been proved in (\ref{eq:AinL}). By Theorem \ref{th:LttoF}, the iterated log-tangent integral $\Lt_{\mathbf{k}}^{\mathbf{l}}(\pi/2)$ can be written as a $\mathbb{Q}$-linear combination of
\begin{align*}
\left\{ i^{|\mathbf{l}|+n} (\pi i)^{m+p} T(\mathbf{k}_{1}) A(\mathbf{k}_{2};i) \mathrel{}\middle|\mathrel{}  \begin{gathered}m, p \in \mathbb{Z}_{\ge 0},\ 
\mathbf{k}_{1}, \mathbf{k}_{2}: \text{admissible},\\
m+p+|\mathbf{k}_{1}| + |\mathbf{k}_{2}| = |\mathbf{k}|,\\
p + {\rm dep}(\mathbf{k}_{1})+{\rm dep}(\mathbf{k}_{2}) = |\mathbf{k}-\mathbf{1}_{n}-\mathbf{l}|
 \end{gathered}\right\}.
\end{align*}
Since $(\pi i)^{m+p} = 2^{m+p}A(1;i)^{m+p}$ and Corollary \ref{co:TtoA}, it follows that 
$(\pi i)^{m+p} T(\mathbf{k}_{1}) A(\mathbf{k}_{2};i)$ can be written as a $\mathbb{Q}$-linear combination of multiple $A$-values of weight $|\mathbf{k}|$ by using the product formula for multiple $A$-functions. In other words, $\Lt_{\mathbf{k}}^{\mathbf{l}}(\pi/2)$ can be written as a $\mathbb{Q}$-linear combination of $i^{|\mathbf{l}|+n}A(\mathbf{k}'; i)$ where  $i^{|\mathbf{l}|+n}A(\mathbf{k}'; i)$ is a real number and satisfies $|\mathbf{k}'| = |\mathbf{k}|$. From the definitions of $A^{e}$ and $A^{o}$, it follows that
\begin{align*}
A(\mathbf{k}'; i)&=\begin{cases}
A^{e}(\mathbf{k}';\pi/2) + iA^{o}(\mathbf{k}';\pi/2)\quad \text{if}\ |\mathbf{k}'|:{\rm even},\\
A^{o}(\mathbf{k}';\pi/2) + iA^{e}(\mathbf{k}';\pi/2)\quad \text{if}\ |\mathbf{k}'|:{\rm odd}.
\end{cases}
\end{align*}
Therefore, we have
\begin{align*}
i^{|\mathbf{l}|+n}A(\mathbf{k}'; i)&=\begin{cases}
\mathcal{A}_{|\mathbf{k}'|}^{e} + i\mathcal{A}_{|\mathbf{k}'|}^{o}\quad \text{if}\ |\mathbf{k}'|+|\mathbf{1}_{n}+\mathbf{l}|:{\rm even},\\
\mathcal{A}_{|\mathbf{k}'|}^{o} + i\mathcal{A}_{|\mathbf{k}'|}^{e}\quad \text{if}\ |\mathbf{k}'|+|\mathbf{1}_{n}+\mathbf{l}|:{\rm odd}.
\end{cases}
\end{align*}
By the condition that $i^{|\mathbf{l}|+n}A(\mathbf{k}'; i)$ is a real number and $|\mathbf{k}'| = |\mathbf{k}|$, we have
\begin{align*}
i^{|\mathbf{l}|+n}A(\mathbf{k}'; i) \in \begin{cases} 
\mathcal{A}_{|\mathbf{k}|}^{e}&{\rm if}\quad |\mathbf{k}-\mathbf{1}_{n}-\mathbf{l}|:{\rm even},\\ 
\mathcal{A}_{|\mathbf{k}|}^{o}&{\rm if}\quad |\mathbf{k}-\mathbf{1}_{n}-\mathbf{l}|:{\rm odd},
\end{cases}
\end{align*}
which completes the proof of Theorem \ref{th:L=A}.
\section*{Acknowledgment}
The author is deeply grateful to Prof. Masanobu Kaneko for their helpful comments.

\vspace{4mm}

{\footnotesize
{\sc
\noindent
Graduate School of Mathematics, Nagoya University,\\
Chikusa-ku, Nagoya 464-8602, Japan.
}\\
{\it E-mail address}, R. Umezawa\hspace{1.75mm}: {\tt
m15016w@math.nagoya-u.ac.jp}\\
}

\end{document}